\newtheorem{theorem}[subsection]{Theorem}
\newtheorem{corollary}[subsection]{Corollary}
\newtheorem{example}[subsection]{Example}
\newtheorem{proposition}[subsection]{Proposition}
\newtheorem{definition}[subsection]{Definition}
\newtheorem{lemma}[subsection]{Lemma}
\newtheorem{question}[subsection]{Question}
\newtheorem{problem}[subsection]{Problem}
\title{A note on the Macías topology}
\author{Jhixon Macías and Reyes Ortiz}
\address{Jhixon Macías\newline
University of Puerto Rico at Mayaguez, Mayaguez, PR, USA\newline
United States of America}
\email{jhixon.macias@upr.edu}
\address{Reyes Ortiz\newline
University of Puerto Rico at Mayaguez, Mayaguez, PR, USA\newline
United States of America}
\email{reyes.ortiz@upr.edu}
\keywords{Primes, Topology, Integral Domains, Golomb’s topology, Macías topology}
\subjclass[2020]{54A05; 54G05; 54H11}
\begin{document}

\begin{abstract}
We study some properties of the closure operator in the Macías topology on infinite integral domains. Moreover, under certain conditions, we present topological proofs of the infiniteness of maximal ideals and non-associated irreducible elements, taking advantage of the hyperconnectedness of the Macías topology. Additionally, some problems are proposed.
\end{abstract}

\maketitle
\section{Introduction}\label{section1}
In 1955, H. Furstenberg introduced a topology $\tau_F$ on the integers $\mathbb{Z}$ generated by the collection of arithmetic progressions of the form $a+b\mathbb{Z}$ ($a,b\in \mathbb{Z}, b\geq 1$) and with it presented the first topological proof of the infinitude of prime numbers, see \cite{furstenberg1955infinitude}. A couple of years earlier (1953), M. Brown had introduced a topology $\tau_G$ on the natural numbers $\mathbb{N}$ that turns out to be coarser than Furstenberg's topology induced on $\mathbb{N}$. This latter topology is generated by the collection of arithmetic progressions $a+b\mathbb{N}$ with $a\in \mathbb{N}$ and $b\in\mathbb{N}\cup\{0\}$ such that $\gcd(a,b)=1$. It was not until 1959 that the topology introduced by Brown was popularized by S. Golomb (now known as Golomb's topology) who in \cite{golomb1959connected} proves that Dirichlet's theorem on arithmetic progressions is equivalent to the set of prime numbers $\mathbb{P}$ being dense in the topological space $(\mathbb{N},\tau_G)$. Golomb also presents a topological proof of the infinitude of prime numbers, verifying that Furstenberg's argument also works on $(\mathbb{N},\tau_G)$. In 1997, a \textit{Golomb topology} on any integral domain was defined by J. Knopemacher and S. Porubsky \cite{knopemacher1997topologies} and is also considered by  P. L. Clark in \cite{clark2017euclidean}.  In both cases, the proofs of Golomb and Furstenberg are generalized, respectively. On the one hand, J. Knopfmacher and S. Porubsky demonstrate that any integral domain (that is not a field) has an infinite number of maximal ideals if the set of its units is not open in its respective \textit{Golomb space}, and on the other hand, P. L. Clark proves that a semiprimitive integral domain (that is not a field) in which every nonzero nonunit element has at least one irreducible divisor has an infinite number of non-associated irreducible elements.

In \cite{Jhixon2023} the \textit{Macías topology} $\tau_M\subset\tau_{G}$ is introduced, which is generated by the collection of sets $\sigma_n:=\{m\in\mathbb{N}: \gcd(n,m)=1\}$. Additionally, some of its properties are studied. For example, the topological space $(\mathbb{N},\tau_M)$ does not satisfy the $\mathrm{T}_0$ separation axiom, satisfies (vacuously) the $\mathrm{T}_4$ separation axiom, is ultraconnected, hyperconnected (therefore connected, locally connected, path-connected), not countably compact (therefore not compact), but it is limit point compact and pseudocompact. On the other hand, for $n,m\in\mathbb{N}$, it holds that $\textbf{cl}_{(\mathbb{N},\tau_M)}(\{nm\})=\textbf{cl}_{(\mathbb{N},\tau_M)}(\{n\})\cap \textbf{cl}_{(\mathbb{N},\tau_M)}(\{m\})$ where $\textbf{cl}_{(\mathbb{N},\tau_M)}(\{n\})$ denotes the closure of the singleton set $\{n\}$ in the topological space $(\mathbb{N},\tau_M)$. Furthermore, in \cite{jhixon2024} (using the topology $\tau_M$), a topological proof of the infinitude of prime numbers is presented (a different from the proofs of Furstenberg and Golomb). In the same work, the infinitude of any non-empty subset of prime numbers is characterized in the following sense: if $A\subset \mathbb{P}$ (non-empty), then $A$ is infinite if and only if $A$ is dense in $(\mathbb{N},\tau_M)$, see \cite[Theorem 4]{jhixon2024}. The \textit{Macías topology} on any integral domain was defined by J. Macías \cite{MACIAS2024109070}, and some of its properties were studied (particularly on principal ideal domains). Moreover, a generalization of Macías' proof was presented in the following sense: any principal ideal domain (that is not a field) has an infinite number of maximal ideals if the set of its units does not form an open set in the \textit{Macías space}.

In this work, we study some properties of the closure operator in the Macías topology on infinite integral domains (Section \ref{section3}). In particular, we characterize when the closure of an element is the ideal generated by it (Theorem \ref{th1}), as well as topologically characterizing when a unique factorization domain is a principal ideal domain (Corollary \ref{corcara}). We also proved that $\widetilde{M(R)}$ and $M(R)$ are ultraconnected. Additionally, at the end of this section, we propose some open problems. Moreover, in Section \ref{section4}, under certain conditions, we present two topological proofs of the infinitude of maximal ideals on infinite integral domains (Subsection \ref{subsection4.1}) and non-associated irreducible elements on infinite \textit{Furstenberg domains} (Subsection \ref{subsection4.2}), taking advantage of the hyperconnectedness of the Macías topology.

\section{Preliminaries}\label{section2}

Throughout this paper, we denote by $R$ an integral domain that is not a field (hence infinite).  If an additional property on $R$ is required, it will be mentioned as appropriate. In general, for $X\subset R$, $X^0=X\setminus\{0\}$. On the other hand, let us denote by $R^\times$ and $R^\#$ the set of units of $R$ and the set $R^0 \setminus R^\times$, respectively. 

For each $r\in R$, define $\sigma_r:=\{s\in R: \langle r\rangle+\langle s \rangle =R\}$ where for all $r\in R$, $\langle r \rangle$ is the ideal generated by $r$. Let $\beta_R:=\{\sigma_r: r\in R\}$, that is, $\sigma_r$ is the set of all elements that are \textit{comaximal} to $r$.  The set $\beta_R$ is a basis for some topology on $R$ \cite[Theorem 2.1]{MACIAS2024109070}. We denote by $\widetilde{M(R)}$ the topological space $(R,\tau_R)$, where $\tau_R$ is the topology generated by $\beta_R$. \textit{The Macías topological space} $M(R)$ is the topological space $(R^0, \tau_{R^0})$ where $\tau_{R^0}$ is the topology generated by the basis $\beta_{R^0}:=\{\sigma_k^0: k\in R^0\}$.

In \cite{clark2019note}, the topological space $\widetilde{G(R)}$ is defined, which is obtained by equipping $R$ with the topology generated by the collection of \textbf{coprime cosets} ${x+I}$ where $x\in R$ and $I$ is a nonzero ideal of $R$. \textit{The Golomb topological space} $G(R)$ is obtained by equipping $R^0$ with the subspace topology induced by $\widetilde{G(R)}$.

The following theorem summarizes the properties of $\widetilde{M(R)}$ and $M(R)$ presented in \cite{MACIAS2024109070}.

\begin{theorem}\label{thpropiedades}
The following propositions hold over $R$:
\begin{enumerate}
\item $\sigma_{rs}=\sigma_r\cap \sigma_s$ for all $r,s\in R$.
    \item $u\in R^\times$ if and only if $\sigma_u=R$.
    \item If $u\in R^\times$, then $u\in\sigma_r$ for all $r\in R$.
    \item $0\in\sigma_k$ if and only if $k\in R^\times$.
    \item $\textbf{cl}_{\widetilde{M(R)}}(\{0\})\subset \textbf{cl}_{\widetilde{M(R)}}(\{r\})$ for all $r\in R$.

    \item $\textbf{cl}_{\widetilde{M(R)}}(\{ru\})=\textbf{cl}_{\widetilde{M(R)}}(\{ur\})=\textbf{cl}_{\widetilde{M(R)}}(\{r\})$ for all $r\in R$ and $u\in R^\times$.
    
    \item If $u\in R^\times$, then $\textbf{cl}_{M(R)}(\{u\})=R^0$ and $\textbf{cl}_{\widetilde{M(R)}}(\{u\})=R$.
    \item $\widetilde{M(R)}$ and $M(R)$ are topological semigroups.
    \item $\widetilde{M(R)}$ and $M(R)$ are hyperconnected.
    \item  Every open set in $\widetilde{M(R)}$ (except for $\sigma_0=R^\times$) is open in $\widetilde{G(R)}$ (The Macías topology is coarser than the Golomb topology).
    \item  $M(R)$ is indiscrete if and only if $R$ is a field.
    \item If R is a principal ideal domain then for any irreducible element $p$ in $R$, $\textbf{cl}_{\widetilde{M(R)}}(\{p\})= \langle p\rangle$. Moreover, for any $x,y\in R$,  $\textbf{cl}_{\widetilde{M(R)}}(\{xy\}) = \textbf{cl}_{\widetilde{M(R)}}(\{x\}) \cap \textbf{cl}_{\widetilde{M(R)}}(\{y\}).$ 

    \item If $R$ is a principal ideal domain, then $\widetilde{M(R)}$ and $M(R)$ are ultraconnected and do not satisfy the separation axiom $\mathrm{T}_0$.
\end{enumerate}
\end{theorem}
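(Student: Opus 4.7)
The plan is to treat ultraconnectedness and the failure of $\mathrm{T}_0$ separately, both reductions pivoting on item (1) of this theorem, namely $\sigma_{rs}=\sigma_r\cap\sigma_s$, together with the built-in symmetry $r\in\sigma_k\iff k\in\sigma_r$.

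For ultraconnectedness I would first derive from (1) the key auxiliary fact that $rs\in\textbf{cl}(\{r\})$ for every $r,s\in R$. Indeed, if a basic open $\sigma_k$ contains $rs$, then $k\in\sigma_{rs}=\sigma_r\cap\sigma_s$, so in particular $r\in\sigma_k$; hence every open neighborhood of $rs$ contains $r$. Now, given two non-empty closed sets $F_1,F_2$ in $\widetilde{M(R)}$ (respectively $M(R)$), I would pick $r_i\in F_i$ for $i=1,2$ and conclude
\[
r_1r_2\in\textbf{cl}(\{r_1\})\cap\textbf{cl}(\{r_2\})\subset F_1\cap F_2,
\]
so $F_1\cap F_2\neq\emptyset$. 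In the $M(R)$ case one also needs $r_1r_2\neq 0$, which is immediate from the integral-domain hypothesis.

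For the failure of $\mathrm{T}_0$, the plan is to produce two distinct, topologically indistinguishable points. Since $R$ is a principal ideal domain that is not a field, it contains an irreducible element $p$. By (1), $\sigma_{p^2}=\sigma_p\cap\sigma_p=\sigma_p$, so for every $k$ one has $p^2\in\sigma_k\iff k\in\sigma_{p^2}=\sigma_p\iff p\in\sigma_k$. Thus $p$ and $p^2$ lie in exactly the same basic open sets, and hence in exactly the same open sets. Since $p$ is a nonzero non-unit, $p(p-1)\neq 0$ in the integral domain $R$, so $p\neq p^2$; both elements belong to $R^0$, which proves that $\mathrm{T}_0$ fails in both $\widetilde{M(R)}$ and $M(R)$.

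I do not anticipate any substantial obstacle. The single conceptual step is to notice that (1), combined with the symmetry of comaximality, is strong enough to push $r_1r_2$ into both closures simultaneously and to identify a non-unit topologically with its square. The PID hypothesis enters rather mildly, essentially only to guarantee the existence of an irreducible element for the $\mathrm{T}_0$ step, so the same proof would go through for any integral domain that is not a field.
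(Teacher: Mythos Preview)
Your argument is correct. In this paper Theorem~\ref{thpropiedades} is quoted from \cite{MACIAS2024109070} without proof, but the paper does later reprove ultraconnectedness in full generality (Theorem~\ref{thultra}) by exactly your device: one shows $xy\in\mathbf{cl}(\{x\})\cap\mathbf{cl}(\{y\})$---there via Corollary~\ref{cor}, which first establishes the full equality $\mathbf{cl}(\{xy\})=\mathbf{cl}(\{x\})\cap\mathbf{cl}(\{y\})$---and then picks a point from each closed set. Your route is marginally more direct, since you use only the inclusion $rs\in\mathbf{cl}(\{r\})$ rather than the two-sided closure identity.

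One small sharpening of your closing remark: your $\mathrm{T}_0$ argument nowhere uses that $p$ is irreducible, since $\sigma_{x^2}=\sigma_x\cap\sigma_x=\sigma_x$ holds for every $x$ by item~(1). Any nonzero nonunit and its square are therefore topologically indistinguishable, so the PID hypothesis is not even needed ``mildly'' to supply an irreducible---it is entirely superfluous, confirming your suspicion that the result holds for an arbitrary integral domain that is not a field.
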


\section{Other Closure Properties on $\widetilde{M(R)}$ and $M(R)$}\label{section3}

The results presented in this section were obtained while attempting to answer the following question.
\begin{question}\label{question1}
Let $x \in R$. When does it hold that $\mathbf{cl}_{\widetilde{M(R)}}(\{x\}) = \langle x \rangle$?
\end{question}

The following theorem shows that it is necessary and sufficient for $\langle x \rangle$ to be closed in $\widetilde{M(R)}$.

\begin{theorem}\label{th1}
Let $x \in R$. Then $\mathbf{cl}_{\widetilde{M(R)}}(\{x\}) = \langle x \rangle$ if and only if $\langle x \rangle$ is closed in $\widetilde{M(R)}$. Similarly, if $x \in R^0$, then $\mathbf{cl}_{M(R)}(\{x\}) = \langle x \rangle^0$ if and only if $\langle x \rangle^0$ is closed in $M(R)$.
\end{theorem}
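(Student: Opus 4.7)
The plan is to recognize that the forward direction of each biconditional is immediate because any closure operator produces a closed set, so the real content is the reverse direction. For that, I would prove the unconditional inclusion $\langle x \rangle \subseteq \mathbf{cl}_{\widetilde{M(R)}}(\{x\})$ first, and then use the hypothesis that $\langle x \rangle$ is closed to obtain the opposite inclusion essentially for free.

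To establish the unconditional inclusion, I would take $y \in \langle x \rangle$, write $y = rx$, and verify that every basic open neighborhood $\sigma_s$ of $y$ also contains $x$. The key observation is that $\langle y \rangle = \langle rx \rangle \subseteq \langle x \rangle$, so if $y \in \sigma_s$, i.e., $\langle y \rangle + \langle s \rangle = R$, then $\langle x \rangle + \langle s \rangle \supseteq \langle y \rangle + \langle s \rangle = R$, so $x \in \sigma_s$. This shows $y \in \mathbf{cl}_{\widetilde{M(R)}}(\{x\})$. Note that this argument is uniform in $s$, including the edge case $s = 0$ (where $\sigma_0 = R^\times$), so nothing special is needed there.

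For the reverse inclusion under the hypothesis, $\mathbf{cl}_{\widetilde{M(R)}}(\{x\})$ is by definition the smallest closed set containing $x$, and $\langle x \rangle$ is a closed set containing $x$, so $\mathbf{cl}_{\widetilde{M(R)}}(\{x\}) \subseteq \langle x \rangle$. Combining the two inclusions yields equality. The $M(R)$ statement follows from the same argument restricted to $R^0$: given $x \in R^0$, the hypothesis $x \neq 0$ ensures $x \in \langle x \rangle^0$, and any nonzero $y = rx \in \langle x \rangle$ lies in $\langle x \rangle^0$, so the basis $\{\sigma_s^0 : s \in R^0\}$ argument is verbatim the same.

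Honestly, I do not expect a real obstacle here; the proof is a clean two-step observation once the two inclusions are separated. The only point worth double-checking is that the inclusion $\langle x \rangle \subseteq \mathbf{cl}_{\widetilde{M(R)}}(\{x\})$ truly requires no hypothesis on $R$ or on $x$, which is confirmed by the ideal inclusion $\langle rx \rangle \subseteq \langle x \rangle$ being available for any commutative ring.
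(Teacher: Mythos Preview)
Your proposal is correct and follows essentially the same route as the paper: both prove the unconditional inclusion $\langle x \rangle \subseteq \mathbf{cl}_{\widetilde{M(R)}}(\{x\})$ by noting that $\langle x_0 \rangle \subseteq \langle x \rangle$ forces $\langle x \rangle + \langle r \rangle = R$ whenever $\langle x_0 \rangle + \langle r \rangle = R$, and then invoke the closedness hypothesis to get the reverse inclusion via minimality of the closure. Your explicit remark that the first inclusion needs no hypothesis is a slight organizational improvement over the paper's presentation, but the argument is the same.
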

\begin{proof}
    It is clear that $\mathbf{cl}_{\widetilde{M(R)}}(\{x\}) = \langle x \rangle$ implies that $\langle x \rangle$ is closed in $\widetilde{M(R)}$. On the other hand, suppose $\langle x \rangle$ is closed in $\widetilde{M(R)}$. Let $x_0 \in \langle x \rangle$ and $\sigma_r \in \mathcal{B}_R$ such that $x_0 \in \sigma_r$. Then $R = \langle x_0 \rangle + \langle r \rangle \subset \langle x \rangle + \langle r \rangle \subset R$, that is $\langle x \rangle + \langle r \rangle =R$. Thus $x \in \sigma_r$, so $x_0 \in \mathbf{cl}_{\widetilde{M(R)}}(\{x\})$ and $\langle x \rangle \subset \mathbf{cl}_{\widetilde{M(R)}}(\{x\})$. Finally, since $\langle x \rangle$ is closed by hypothesis, $\mathbf{cl}_{\widetilde{M(R)}}(\{x\}) \subset \langle x \rangle$.
\end{proof}


\begin{lemma}\label{lem}
Let $x \in R$. Then $x \in \sigma_x$ if and only if $x \in R^\times$.
\end{lemma}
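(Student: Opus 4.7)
The plan is to unpack the definition of $\sigma_x$ directly; there is essentially no depth here.

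For the ``if'' direction, I would invoke Theorem \ref{thpropiedades}(2): if $x \in R^\times$, then $\sigma_x = R$, and in particular $x \in \sigma_x$. Alternatively one can simply note that $\langle x \rangle = R$ when $x$ is a unit, so $\langle x \rangle + \langle x \rangle = R$.

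For the ``only if'' direction, I would unfold the definition: $x \in \sigma_x$ means $\langle x \rangle + \langle x \rangle = R$. Since $\langle x \rangle + \langle x \rangle = \langle x \rangle$, this yields $\langle x \rangle = R$, hence $1 \in \langle x \rangle$, so $x$ admits a multiplicative inverse and $x \in R^\times$.

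There is no real obstacle; the statement is essentially a tautology once the defining equality $\langle x\rangle + \langle x\rangle = \langle x\rangle$ is observed. The only reason this deserves to be recorded as a lemma is that it will presumably be combined with Theorem \ref{th1} and the results of Theorem \ref{thpropiedades} to characterize when $\mathbf{cl}_{\widetilde{M(R)}}(\{x\}) = \langle x \rangle$, i.e., to address Question \ref{question1}.
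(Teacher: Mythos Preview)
Your proof is correct and essentially identical to the paper's. The only cosmetic difference is that for the ``if'' direction the paper cites Theorem~\ref{thpropiedades}(3) (units lie in every $\sigma_r$) while you cite item~(2) ($\sigma_u = R$ for units); both are immediate.
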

\begin{proof}
If $x \in \sigma_x$, then $R = \langle x \rangle + \langle x \rangle = \langle x \rangle$. Therefore, $x \in R^\times$. On the other hand, if $x \in R^\times$, then $x \in \sigma_x$ (see Theorem \ref{thpropiedades} item (3)).
\end{proof}

We know that when $x \in R^\times$, we have $\mathbf{cl}_{\widetilde{M(R)}}(\{x\}) = R$; moreover, $\mathbf{cl}_{\widetilde{M(R)}}(\{0\}) \subset \mathbf{cl}_{\widetilde{M(R)}}(\{x\})$ for all $x \in R$ (see Theorem \ref{thpropiedades}, item (5) and item (7)). The following theorem shows, in general, what happens when $x \in R^\#$.

\begin{theorem}\label{th2}
Let $x \in R^\#$. Then $\langle x \rangle \subset \mathbf{cl}_{\widetilde{M(R)}}(\{x\}) \subset R \setminus \sigma_x$. Similarly, $\langle x \rangle^0 \subset \mathbf{cl}_{M(R)}(\{x\}) \subset R^0 \setminus \sigma^0_x$.
\end{theorem}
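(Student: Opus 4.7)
The plan is to handle the two inclusions independently, since each rests on a different observation already available from Theorem \ref{thpropiedades} and Lemma \ref{lem}, and then just re-run the arguments with the obvious decorations for $M(R)$.

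For the inclusion $\langle x\rangle\subset\mathbf{cl}_{\widetilde{M(R)}}(\{x\})$ I would recycle the argument that already appears inside the proof of Theorem \ref{th1}: pick $y\in\langle x\rangle$ and a basic open $\sigma_r\in\beta_R$ containing $y$. Since $y\in\langle x\rangle$ we have $\langle y\rangle\subset\langle x\rangle$, so
\[
R=\langle y\rangle+\langle r\rangle\subset \langle x\rangle+\langle r\rangle\subset R,
\]
which forces $x\in\sigma_r$. Hence every basic neighborhood of $y$ meets (in fact contains) $x$, so $y\in\mathbf{cl}_{\widetilde{M(R)}}(\{x\})$.

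For the inclusion $\mathbf{cl}_{\widetilde{M(R)}}(\{x\})\subset R\setminus\sigma_x$ the cleanest route is to observe that $R\setminus\sigma_x$ is closed (its complement $\sigma_x$ is a basic open set) and that it contains $x$. The latter is precisely where the hypothesis $x\in R^\#$ is used: by Lemma \ref{lem}, $x\in\sigma_x$ if and only if $x\in R^\times$, so for $x\in R^\#$ we have $x\in R\setminus\sigma_x$. Because $\mathbf{cl}_{\widetilde{M(R)}}(\{x\})$ is the smallest closed set containing $x$, the inclusion follows at once.

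The Macías-space version $\langle x\rangle^0\subset\mathbf{cl}_{M(R)}(\{x\})\subset R^0\setminus\sigma_x^0$ is then obtained by copying both arguments verbatim with $\sigma_r$ replaced by $\sigma_r^0\in\beta_{R^0}$ and $\langle x\rangle$ replaced by $\langle x\rangle^0$; Lemma \ref{lem} still gives $x\notin\sigma_x$, hence $x\notin\sigma_x^0$. There is no real obstacle here; the only thing one has to be careful about is to quote Lemma \ref{lem} in the right direction (contrapositive) for the right-hand inclusion, rather than trying to compute $\mathbf{cl}_{\widetilde{M(R)}}(\{x\})$ directly from the basis — attempting the latter would require controlling arbitrary intersections of the form $\bigcap\{R\setminus\sigma_r:x\notin\sigma_r\}$, which is exactly the difficulty that Question \ref{question1} isolates.
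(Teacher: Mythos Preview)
Your proposal is correct and follows essentially the same route as the paper: for $\langle x\rangle\subset\mathbf{cl}_{\widetilde{M(R)}}(\{x\})$ you reproduce the argument embedded in the proof of Theorem~\ref{th1} (the paper simply cites that theorem), and for $\mathbf{cl}_{\widetilde{M(R)}}(\{x\})\subset R\setminus\sigma_x$ you use Lemma~\ref{lem} to place $x$ in the closed set $R\setminus\sigma_x$, exactly as the paper does.
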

\begin{proof}
That $\langle x \rangle \subset \mathbf{cl}_{\widetilde{M(R)}}(\{x\})$ is given by Theorem \ref{th1}. On the other hand, since $x$ is not a unit, $x \notin \sigma_x$ (by Lemma \ref{lem}). Therefore, $R \setminus \sigma_x$ is a closed set containing $x$, so $\mathbf{cl}_{\widetilde{M(R)}}(\{x\}) \subset R \setminus \sigma_x$.
\end{proof}
 One of the authors raised \cite[Question 3.13]{MACIAS2024109070}, does there exist an integral domain $R$ which is not a PID, but the spaces $\widetilde{M(R)}$ and $M(R)$ are ultraconnected? In fact, let $F$ and $G$ be two non-trivial closed sets in $\widetilde{M(R)}$. Let $x \in F$ and $y \in G$. If we have that $\mathbf{cl}_{\widetilde{M(R)}}(\{xy\}) = \mathbf{cl}_{\widetilde{M(R)}}(\{x\}) \cap \mathbf{cl}_{\widetilde{M(R)}}(\{y\})$, then
\begin{equation*}
    \{xy\} \subset \mathbf{cl}_{\widetilde{M(R)}}\{xy\} = \mathbf{cl}_{\widetilde{M(R)}}\{x\} \cap \mathbf{cl}_{\widetilde{M(R)}}\{y\} \subset F \cap G.
\end{equation*}
This condition of the intersection of closure of singletons is sufficient for ultraconnectedness of both spaces. Must note, the previous results hold in general for commutative rings with identity. But for the following results, one required to avoid zero divisors. 

\begin{example}\label{example1}
For $M(R)$, consider $R=\mathbb{Z}_6$, which $M(R)$ is not ultraconnected (see \cite[Example 3.15]{MACIAS2024109070}. For $\widetilde{M(\mathbb{R})}$, consider the commutative ring with identity $R=\mathbb{Z}\times\mathbb{Z}$, which has zero divisors. Consider $\widetilde{M(R)}\simeq \widetilde{M(\mathbb{Z})}\times \widetilde{M(\mathbb{Z})}$ (with the product topology). Note that 
\begin{equation*}
    \mathbf{cl}_{\widetilde{M(R)}}(\{(2,0)\})=2\mathbb{Z}\times \{0\} \ \ \text{and} \ \ \mathbf{cl}_{\widetilde{M(R)}}(\{(0,3)\})=\{0\} \times 3\mathbb{Z},
\end{equation*}
which implies that 
\begin{equation*}
    \mathbf{cl}_{\widetilde{M(R)}}(\{(2,0)\})\cap \mathbf{cl}_{\widetilde{M(R)}}(\{(0,3)\}) =\emptyset .
\end{equation*}
On the other hand,
\begin{equation*}
    \mathbf{cl}_{\widetilde{M(R)}}(\{(2,0)\cdot (0,3)\})=\mathbf{cl}_{\widetilde{M(R)}}(\{(0,0)\})=\{0\}\times \{0\},
\end{equation*}
so that
\begin{equation*}
    \mathbf{cl}_{\widetilde{M(R)}}(\{(2,0)\})\cap \mathbf{cl}_{\widetilde{M(R)}}(\{(0,3)\})\neq  \mathbf{cl}_{\widetilde{M(R)}}(\{(2,0)\cdot (0,3)\}).
\end{equation*}
\end{example}

\begin{theorem}\label{cor}
Let $x, y \in R$. Then $\mathbf{cl}_{\widetilde{M(R)}}(\{xy\}) = \mathbf{cl}_{\widetilde{M(R)}}(\{x\}) \cap \mathbf{cl}_{\widetilde{M(R)}}(\{y\})$. Similarly, for $x, y \in R^0$, we have that $\mathbf{cl}_{M(R)}(\{xy\}) = \mathbf{cl}_{M(R)}(\{x\}) \cap \mathbf{cl}_{M(R)}(\{y\})$.
\end{theorem}

\begin{proof}
By Theorem \ref{th2}, $\langle x \rangle \subset \mathbf{cl}_{\widetilde{M(R)}}(\{x\})$ and $\langle y \rangle \subset \mathbf{cl}_{\widetilde{M(R)}}(\{y\})$. Note that $xy \in \langle xy \rangle \subset \langle x \rangle \subset \mathbf{cl}_{\widetilde{M(R)}}(\{x\})$. Similarly, $xy \in \mathbf{cl}_{\widetilde{M(R)}}(\{y\})$. Thus, $xy \in \mathbf{cl}_{\widetilde{M(R)}}(\{x\}) \cap \mathbf{cl}_{\widetilde{M(R)}}(\{y\})$, and therefore, $\mathbf{cl}_{\widetilde{M(R)}}(xy) \subset \mathbf{cl}_{\widetilde{M(R)}}(\{x\}) \cap \mathbf{cl}_{\widetilde{M(R)}}(\{y\})$. On the other hand, let $r \in \mathbf{cl}_{\widetilde{M(R)}}(\{x\}) \cap \mathbf{cl}_{\widetilde{M(R)}}(\{y\})$. Let $\sigma_k \in \mathcal{B}_R$ such that $r \in \sigma_k$. Then $x, y \in \sigma_k$, and consequently $xy \in \sigma_k$. Thus, $r \in \mathbf{cl}_{\widetilde{M(R)}}(\{xy\})$, and therefore $\mathbf{cl}_{\widetilde{M(R)}}\{x\} \cap \mathbf{cl}_{\widetilde{M(R)}}\{y\} \subset \mathbf{cl}_{\widetilde{M(R)}}\{xy\}$.
\end{proof}

Theorem \ref{cor} answered \cite[Question 3.13]{MACIAS2024109070}, and Example \ref{example1} showed that it cannot be improved.

\begin{corollary}
The spaces $\widetilde{M(R)}$ and $M(R)$ are ultraconnected. Therefore are normal, limit point compact, pseudocompact and path-connected.
\end{corollary}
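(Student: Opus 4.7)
The plan is to deduce each of the four properties from the hyperconnectedness (Theorem \ref{thpropiedades}(9)) and ultraconnectedness (Theorem \ref{thultra}) established above, both of which apply to $\widetilde{M(R)}$ and $M(R)$; since the arguments are identical for the two spaces, let $X$ denote either one. Normality is vacuous: ultraconnectedness forbids two disjoint nonempty closed sets, so every disjoint pair of closed sets has one empty, and these are trivially separated by $\emptyset$ and $X$. Pseudocompactness follows because any continuous $f \colon X \to \mathbb{R}$ must be constant: two distinct values would admit disjoint open neighborhoods in $\mathbb{R}$ whose preimages would be disjoint nonempty open sets in $X$, contradicting hyperconnectedness, and a constant function is bounded.

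For path-connectedness I would exploit that $1 \in \sigma_k$ for every $k$ (because $\langle 1 \rangle + \langle k \rangle = R$), so $1$ belongs to every basic open set and hence to every nonempty open set of $X$. Given $x, y \in X$, define $f \colon [0, 1] \to X$ by $f(0) = x$, $f(1) = y$, and $f(t) = 1$ for $t \in (0, 1)$. For any open $V \subset X$: if $1 \in V$, then $(0, 1) \subset f^{-1}(V)$ and one includes the endpoints $0$ or $1$ exactly when $x \in V$ or $y \in V$ respectively, giving one of $(0, 1)$, $[0, 1)$, $(0, 1]$, $[0, 1]$, all open in $[0, 1]$; if $1 \notin V$, then necessarily $V = \emptyset$ and $f^{-1}(V) = \emptyset$. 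Hence $f$ is a continuous path from $x$ to $y$.

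The main obstacle will be limit point compactness, because singletons in $X$ need not be closed (the Macías spaces fail $\mathrm{T}_0$ in the relevant cases), so the usual Hausdorff-style arguments are unavailable. I would argue by contradiction: assume an infinite $A \subset X$ has no limit point, so that for every $y \in X$ there is an open neighborhood $U_y$ of $y$ with $U_y \cap A \subset \{y\}$. The key observation is that this forces every subset $A_1 \subset A$ to be closed: for $y \notin A_1$ with $y \in A$ one has $U_y \cap A_1 \subset \{y\} \cap A_1 = \emptyset$, while for $y \notin A$ one has $U_y \cap A = \emptyset$, so in either case $y$ has a neighborhood disjoint from $A_1$. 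Choosing any proper nonempty $A_1 \subsetneq A$ (possible since $|A| \geq 2$) produces two disjoint nonempty closed sets $A_1$ and $A \setminus A_1$, contradicting ultraconnectedness.
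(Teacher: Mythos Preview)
Your proof is correct. The paper itself gives no proof of this corollary at all; it is stated immediately after Theorem~\ref{thultra} as an unlabeled consequence, relying on the folklore fact (also alluded to in the introduction) that hyperconnected and ultraconnected spaces automatically enjoy these four properties. Your write-up therefore supplies strictly more detail than the paper does.

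A few remarks on the individual pieces. Your normality and pseudocompactness arguments are the standard ones and match the implicit reasoning behind the paper's bare statement. For path-connectedness you exploit the specific feature that $1$ lies in every nonempty open set (a ``generic point''), which is a clean concrete choice; the abstract version of this argument is that in any ultraconnected space one can pick $z\in\overline{\{x\}}\cap\overline{\{y\}}$ and concatenate the paths $t\mapsto x$ on $[0,1)$, $1\mapsto z$ and $0\mapsto z$, $t\mapsto y$ on $(0,1]$, each of which is continuous for the same reason your single path is. Your limit-point-compactness argument is the nicest part: the observation that a set with no limit point has \emph{every} subset closed, and hence splits into two disjoint nonempty closed pieces, is exactly the right way to leverage ultraconnectedness here, and it avoids any appeal to separation axioms that the space lacks.
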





Let $\mathfrak{M}$ be the set of maximal ideals of $R$. For $x \in R^\times$, we denote by $\mathbb{j}(\langle x \rangle)$ the intersection of all maximal ideals that contain $\langle x \rangle$, that is,

\begin{equation*}
    \mathbb{j}(\langle x\rangle)=\bigcap_{\substack{\mathfrak{m}\in\mathfrak{M}\\ \langle x\rangle\subset\mathfrak{m}}}\mathfrak{m}.
\end{equation*}
Most note that $\mathbb{j}(\langle x\rangle)$ was called by Gilmer \cite{gilmer1992multiplicative} the Jacobson Radical of $\langle x\rangle$. On the other hand, Kaplansky \cite{kaplansky2006commutative} and Hungerford \cite{hungerford2012algebra} defined Jacobson Radical of $\langle x\rangle$ to be the intersection of the Jacobson Radical of $R$ with $\langle x\rangle$. Both notions are not the same, as in $\mathbb{Z}$ for any two primes $p_1, p_2$, $\mathbb{j}(\langle p_1 p_2\rangle)$ it is not zero. On the other hand, $J(\mathbb{Z})\cap \langle x\rangle =\{0\}\cap \langle x\rangle=\{0\}$.

\begin{proposition}\label{propclausure}
Let $x \in R^\times$. Then $\mathbf{cl}_{\widetilde{M(R)}}(\{x\}) = \mathbb{j}(\langle x \rangle)$. Similarly, let $x\in R^\#$. Then $\mathbf{cl}_{M(R)}(\{x\}) = \mathbb{j}(\langle x \rangle)^0$.
    
\end{proposition}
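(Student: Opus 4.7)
The plan is to unwind both sides of the claimed equality directly: the closure via the basis $\beta_R=\{\sigma_k:k\in R\}$, and the intersection $\mathbb{j}(\langle x\rangle)$ via its definition. Recall that $r\in\mathbf{cl}_{\widetilde{M(R)}}(\{x\})$ iff every basic open $\sigma_k$ with $r\in\sigma_k$ also contains $x$, i.e.\ whenever $\langle r\rangle+\langle k\rangle=R$ one has $\langle x\rangle+\langle k\rangle=R$. I will prove the two inclusions separately and then deduce the $M(R)$ statement by restriction.

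For the inclusion $\mathbb{j}(\langle x\rangle)\subseteq\mathbf{cl}_{\widetilde{M(R)}}(\{x\})$, I would take $r\in\mathbb{j}(\langle x\rangle)$ and a basic open $\sigma_k$ with $r\in\sigma_k$. If $\langle x\rangle+\langle k\rangle\neq R$, then this sum is contained in some maximal ideal $\mathfrak{m}\supseteq\langle x\rangle$; by hypothesis $r\in\mathfrak{m}$, and also $k\in\mathfrak{m}$, so $\langle r\rangle+\langle k\rangle\subseteq\mathfrak{m}\neq R$, contradicting $r\in\sigma_k$. Hence $x\in\sigma_k$, so $r\in\mathbf{cl}_{\widetilde{M(R)}}(\{x\})$.

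The reverse inclusion is where the real work happens: given $r\in\mathbf{cl}_{\widetilde{M(R)}}(\{x\})$, I need $r\in\mathfrak{m}$ for every maximal $\mathfrak{m}\supseteq\langle x\rangle$. Suppose for contradiction that some such $\mathfrak{m}$ misses $r$. Maximality gives $\langle r\rangle+\mathfrak{m}=R$, so one can pick a single element $m\in\mathfrak{m}$ and $a\in R$ with $ar+m=1$; in particular $\langle r\rangle+\langle m\rangle=R$, i.e.\ $r\in\sigma_m$. But $x,m\in\mathfrak{m}$ forces $\langle x\rangle+\langle m\rangle\subseteq\mathfrak{m}\neq R$, so $x\notin\sigma_m$. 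This contradicts $r\in\mathbf{cl}_{\widetilde{M(R)}}(\{x\})$. The obstacle I anticipate is precisely this step: the Jacobson-radical condition is naturally phrased with the whole ideal $\mathfrak{m}$, but the Macías basis only uses principal comaximality, so the key move is extracting from $\langle r\rangle+\mathfrak{m}=R$ a \emph{single} witness $m\in\mathfrak{m}$ coprime to $r$, which then also lies in $\mathfrak{m}$ alongside $x$.

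For the second statement, since $M(R)$ is the subspace of $\widetilde{M(R)}$ on $R^{0}$, one has $\mathbf{cl}_{M(R)}(\{x\})=\mathbf{cl}_{\widetilde{M(R)}}(\{x\})\cap R^{0}=\mathbb{j}(\langle x\rangle)\cap R^{0}=\mathbb{j}(\langle x\rangle)^{0}$ for $x\in R^{\#}$, so the $M(R)$ version follows at once from the $\widetilde{M(R)}$ version.
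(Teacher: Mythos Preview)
Your proof is correct and follows essentially the same route as the paper's: both directions are argued exactly as you do, including the key move of extracting from $\langle r\rangle+\mathfrak{m}=R$ a single witness $m\in\mathfrak{m}$ with $r\in\sigma_m$ and then observing $x\notin\sigma_m$ since $x,m\in\mathfrak{m}$. Your subspace argument for the $M(R)$ statement is a clean way to handle what the paper leaves implicit under ``Similarly.''
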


\begin{proof}
Let $r \in \mathbb{j}(\langle x\rangle)$. Let $\sigma_k \in \mathcal{B}_R$ such that $r \in \sigma_k$. \textbf{Claim: } $\langle x \rangle + \langle k \rangle = R$. If this were not the case, then there would exist a maximal ideal $\mathfrak{m}$ such that $\langle x \rangle + \langle k \rangle \subset \mathfrak{m}$, but then $\langle x \rangle \subset \mathfrak{m}$, so $r \in \mathfrak{m}$. Thus, $R = \langle r \rangle + \langle k \rangle \subset \mathfrak{m}$, which would be absurd. Therefore, the \textbf{Claim} is true, which implies that $x \in \sigma_k$ and thus $r \in \mathbf{cl}_{\widetilde{M(R)}}(\{x\})$. Hence, $\mathbb{j}(\langle x\rangle) \subset \mathbf{cl}_{\widetilde{M(R)}}(\{x\})$. On the other hand, let $r \in R \setminus \mathbb{j}(\langle x\rangle)$ ($r \notin \mathbb{j}(\langle x\rangle)$). Then, there exists a maximal ideal $\mathfrak{m}$ such that $\langle x \rangle \subset \mathfrak{m}$ and $r \notin \mathfrak{m}$. Since $\mathfrak{m}$ is maximal, the coset $r + \mathfrak{m}$ is invertible, meaning that $\langle r \rangle + \mathfrak{m} = R$, so there exists $\alpha \in R$ and $m \in \mathfrak{m}$ such that $\alpha r + m = 1$. Thus, $r \in \sigma_m$. Now, note that $\sigma_m \cap \mathfrak{m} = \emptyset$, because if there existed $y \in \sigma_m \cap \mathfrak{m}$, then $R = \langle y \rangle + \langle m \rangle \subset \mathfrak{m}$, which is absurd. Thus, $\sigma_m \cap \langle x \rangle = \emptyset$, so $x \notin \sigma_m$. Therefore, $r \notin \mathbf{cl}_{\widetilde{M(R)}}(\{x\})$. Hence, $\mathbf{cl}_{\widetilde{M(R)}}(\{x\}) \subset \mathbb{j}(\langle x\rangle)$.
\end{proof}

Notices that $\mathbf{cl}_{\widetilde{M(R)}}(\{x\})$ is an ideal as the intersection of ideal is an ideal. And by the Chinese Remainder Theorem implies that $R/\mathbf{cl}_{\widetilde{M(R)}}(\{x\})\cong \prod_{\substack{\mathfrak{m}\in\mathfrak{M}\\ \langle x\rangle\subset\mathfrak{m}}} R/\mathfrak{m}$. Hence, if an element does not belong to a maximal ideal (containing $\langle x\rangle$), such element isn't in the clousure of $\{x\}$ in $\widetilde{M(R)}$. Equivalently, if it does not belong to the kernel of the canonical projection of $R$ onto a field of the form $R/\mathfrak{m}$ with $\mathfrak{m}$ a maximal ideal of $R$, it does not belong to the clousure of $\{x\}$.

The \textit{radical} of of an ideal $I$ in $R$,  denoted by $\sqrt{I}$, is defined as

\begin{equation*}
    \sqrt{I}=\{r\in R: r^n\in I \ \ \text{for some} \ \ n\in \mathbb{N}\}.
\end{equation*}

\begin{corollary}
Let $x\in R\setminus R^\times$.  Then $\sqrt{\langle x\rangle}\subset \mathbf{cl}_{\widetilde{M(R)}}(\{x\})$. Similarly, let $x\in R^\#$. Then  $\sqrt{\langle x\rangle}^0\subset \mathbf{cl}_{M(R)}(\{x\})$.
\end{corollary}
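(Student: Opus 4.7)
The plan is to reduce the statement immediately to Proposition \ref{propclausure} and then invoke the standard fact that every maximal ideal is prime. Proposition \ref{propclausure} already identifies the closure:
\[
\mathbf{cl}_{\widetilde{M(R)}}(\{x\}) \;=\; \mathbb{j}(\langle x\rangle) \;=\; \bigcap_{\substack{\mathfrak{m}\in\mathfrak{M}\\ \langle x\rangle\subset\mathfrak{m}}}\mathfrak{m},
\]
so the whole content of the corollary is the set-theoretic inclusion $\sqrt{\langle x\rangle}\subset \mathbb{j}(\langle x\rangle)$.

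To establish that inclusion I would argue elementwise. Fix $r\in\sqrt{\langle x\rangle}$, so that $r^{n}\in\langle x\rangle$ for some $n\in\mathbb{N}$. For any maximal ideal $\mathfrak{m}$ containing $\langle x\rangle$ one has $r^{n}\in\mathfrak{m}$; since $R$ is (in particular) commutative with identity and $\mathfrak{m}$ is maximal, $\mathfrak{m}$ is prime, and primality iterated gives $r\in\mathfrak{m}$. Taking the intersection over all such $\mathfrak{m}$ yields $r\in \mathbb{j}(\langle x\rangle)$. Combined with Proposition \ref{propclausure}, this proves the first containment.

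For the $M(R)$ version I would simply intersect the inclusion just proved with $R^{0}$, and then invoke the second half of Proposition \ref{propclausure}, which identifies $\mathbf{cl}_{M(R)}(\{x\})$ with $\mathbb{j}(\langle x\rangle)^{0}$. Writing $\sqrt{\langle x\rangle}^{0}=\sqrt{\langle x\rangle}\cap R^{0}$, the inclusion $\sqrt{\langle x\rangle}^{0}\subset \mathbf{cl}_{M(R)}(\{x\})$ is immediate.

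I do not expect a substantive obstacle: the only genuine input is that maximal ideals are prime, so the radical (the intersection of \emph{all} primes above $\langle x\rangle$) is trivially contained in the Jacobson-type intersection $\mathbb{j}(\langle x\rangle)$ (the intersection over the \emph{maximal} ones). A small stylistic point worth noting in the write-up is that the statement is vacuous when $x\in R^{\times}$ (both sides are $R$, respectively $R^{0}$), so the content lies in the case $x\in R^{\#}$, which is exactly the hypothesis needed for Proposition \ref{propclausure} to be non-trivial.
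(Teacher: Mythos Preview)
Your proposal is correct and follows exactly the approach the paper intends: the corollary is stated without proof, immediately after Proposition~\ref{propclausure}, precisely because it is meant to be the one-line consequence you describe, namely $\sqrt{\langle x\rangle}\subset \mathbb{j}(\langle x\rangle)=\mathbf{cl}_{\widetilde{M(R)}}(\{x\})$ via the fact that maximal ideals are prime. Your observation about the $x\in R^{\times}$ hypothesis being vacuous is also apt; the paper's notation here appears to be a slip (the same $R^{\times}$ appears in the hypothesis of Proposition~\ref{propclausure} and in the definition of $\mathbb{j}$, where $R^{\#}$ is clearly what is meant).
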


\begin{corollary}
If $R$ is a principal ideal domain, then $\mathbf{cl}_{\widetilde{M(R)}}(\{p\})=\sqrt{\langle p\rangle}$ for every irreducible element $p\in R$. Similarly, $\mathbf{cl}_{M(R)}(\{p\})=\sqrt{\langle p\rangle}^0$.
\end{corollary}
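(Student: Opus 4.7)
The plan is to invoke Proposition \ref{propclausure} and reduce the claim to the observation that, in a principal ideal domain, the ideal generated by an irreducible element is already radical. Both ingredients are essentially classical facts about PIDs glued together with the computation of the closure operator that was established in the preceding proposition.

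First I would recall that if $R$ is a PID and $p\in R$ is irreducible, then $\langle p\rangle$ is a maximal ideal of $R$. Consequently, the only maximal ideal containing $\langle p\rangle$ is $\langle p\rangle$ itself, so the Gilmer--Jacobson-type intersection collapses:
\begin{equation*}
\mathbb{j}(\langle p\rangle)=\bigcap_{\substack{\mathfrak{m}\in\mathfrak{M}\\ \langle p\rangle\subset\mathfrak{m}}}\mathfrak{m}=\langle p\rangle.
\end{equation*}
Since $p$ is a nonzero non-unit, Proposition \ref{propclausure} applies and gives $\mathbf{cl}_{\widetilde{M(R)}}(\{p\})=\mathbb{j}(\langle p\rangle)=\langle p\rangle$.

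Next I would identify $\langle p\rangle$ with $\sqrt{\langle p\rangle}$. Because $\langle p\rangle$ is maximal it is in particular prime, and every prime ideal equals its own radical. The preceding corollary already provides the inclusion $\sqrt{\langle p\rangle}\subset \mathbf{cl}_{\widetilde{M(R)}}(\{p\})$, so it suffices to establish the reverse inclusion, which follows immediately from $\mathbf{cl}_{\widetilde{M(R)}}(\{p\})=\langle p\rangle\subset \sqrt{\langle p\rangle}$. Chaining the equalities yields $\mathbf{cl}_{\widetilde{M(R)}}(\{p\})=\sqrt{\langle p\rangle}$, which is the first assertion.

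The statement for $M(R)$ is obtained by restricting to $R^0$: since $p\in R^\#$, one has $\mathbf{cl}_{M(R)}(\{p\})=\mathbf{cl}_{\widetilde{M(R)}}(\{p\})\cap R^0=\sqrt{\langle p\rangle}^{0}$. There is no substantive obstacle here; the only subtlety worth double-checking is that Proposition \ref{propclausure} indeed applies to $p\in R^\#$ (so that $\langle p\rangle$ is a proper nonzero ideal and the intersection defining $\mathbb{j}(\langle p\rangle)$ is non-vacuous), which is guaranteed by the hypothesis that $p$ is irreducible.
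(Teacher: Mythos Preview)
Your proof is correct and follows precisely the approach the paper intends: the corollary is stated without proof immediately after Proposition~\ref{propclausure} and the inclusion $\sqrt{\langle x\rangle}\subset\mathbf{cl}_{\widetilde{M(R)}}(\{x\})$, and your argument assembles exactly those two ingredients together with the standard fact that in a PID the ideal generated by an irreducible is maximal (hence prime, hence radical).
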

\begin{corollary}
If $x$ is prime in $R$, then $\mathbf{cl}_{\widetilde{M(R)}}(\{x\})={\langle x\rangle}$.
\end{corollary}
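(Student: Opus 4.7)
My plan is to apply Proposition \ref{propclausure}, which identifies $\mathbf{cl}_{\widetilde{M(R)}}(\{x\})$ with $\mathbb{j}(\langle x\rangle)$, the intersection of the maximal ideals of $R$ containing $\langle x\rangle$, and then to collapse this intersection to $\langle x\rangle$ using the primality of $x$. Since a prime element lies in $R^{\#}$, the hypothesis of the proposition is met.

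The inclusion $\langle x\rangle\subset\mathbf{cl}_{\widetilde{M(R)}}(\{x\})$ is immediate: every maximal ideal appearing in $\mathbb{j}(\langle x\rangle)$ already contains $\langle x\rangle$ by definition. Alternatively, since $\langle x\rangle$ is a prime ideal one has $\sqrt{\langle x\rangle}=\langle x\rangle$, and the preceding corollary gives $\sqrt{\langle x\rangle}\subset\mathbf{cl}_{\widetilde{M(R)}}(\{x\})$.

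For the reverse inclusion I would take $r\notin\langle x\rangle$ and try to separate $r$ from $\langle x\rangle$ by some maximal ideal, i.e.\ to find $\mathfrak{m}\in\mathfrak{M}$ with $\langle x\rangle\subset\mathfrak{m}$ and $r\notin\mathfrak{m}$, which would put $r$ outside $\mathbb{j}(\langle x\rangle)$. Passing to the integral domain $R/\langle x\rangle$ (which is a domain precisely because $x$ is prime), this reduces to showing that the Jacobson radical of $R/\langle x\rangle$ vanishes, so that the nonzero image $\bar r$ misses some maximal ideal.

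The hard part will be precisely this last step. An integral domain need not have zero Jacobson radical in general---any nontrivial local domain provides a counterexample---so the primality of $x$ alone does not force $\mathbb{j}(\langle x\rangle)\subset\langle x\rangle$. Pushing the argument through appears to require an auxiliary hypothesis on $R$, most naturally that $\langle x\rangle$ is itself maximal (as happens automatically in a principal ideal domain, recovering the earlier corollary) or that $R$ is a Jacobson ring. Under any such assumption the corollary drops out at once from Proposition \ref{propclausure}.
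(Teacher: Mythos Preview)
The paper provides no proof of this corollary; its placement immediately after Proposition~\ref{propclausure} suggests the intended argument is exactly the one you sketch, namely $\mathbf{cl}_{\widetilde{M(R)}}(\{x\})=\mathbb{j}(\langle x\rangle)$ together with the hope that $\mathbb{j}(\langle x\rangle)=\langle x\rangle$ whenever $x$ is prime. Your diagnosis of the difficulty is correct, and in fact the gap is not repairable at the stated level of generality: the corollary is false for arbitrary integral domains.

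A concrete counterexample is $R=k[s,t]_{(s,t)}$, the localization of a polynomial ring in two variables at the origin. This is a two-dimensional local domain with unique maximal ideal $\mathfrak{m}=(s,t)$. The element $s$ is prime because $R/\langle s\rangle\cong k[t]_{(t)}$ is a domain, yet the only maximal ideal containing $\langle s\rangle$ is $\mathfrak{m}$, so $\mathbb{j}(\langle s\rangle)=\mathfrak{m}\supsetneq\langle s\rangle$. One can also see this directly in the topology: in any local domain the basic open sets $\sigma_r$ are exactly $R$ (when $r$ is a unit) and $R^{\times}$ (when $r$ is not), so the only proper nonempty closed set is $\mathfrak{m}$ itself, and $\mathbf{cl}_{\widetilde{M(R)}}(\{s\})=\mathfrak{m}\neq\langle s\rangle$.

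Thus your proposal is the natural approach, and your concluding caveat is exactly right: the statement goes through under an auxiliary hypothesis such as $\langle x\rangle$ maximal or $R$ Jacobson, but not otherwise. The paper's corollary, as written, needs such a hypothesis.
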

\begin{proposition}
Let $I$ be a proper ideal of $R$. Then $cl_{\widetilde{M(R)}}(I) = \mathbb{j}(I)$. Similarly, $cl_{M(R)}(I^0) = \mathbb{j}(I)^0$.
\end{proposition}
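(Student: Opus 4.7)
The plan is to directly generalize the proof of Proposition \ref{propclausure}, replacing the principal ideal $\langle x\rangle$ throughout by the arbitrary proper ideal $I$. The only property of $\langle x\rangle$ that the earlier argument used was the biconditional $x\in \mathfrak{m}\iff\langle x\rangle\subset\mathfrak{m}$; now we simply invoke $I\subset\mathfrak{m}$ directly. So I would prove the two containments $\mathbb{j}(I)\subset \mathbf{cl}_{\widetilde{M(R)}}(I)$ and $\mathbf{cl}_{\widetilde{M(R)}}(I)\subset \mathbb{j}(I)$ in turn, and then transfer the argument to $M(R)$.

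For $\mathbb{j}(I)\subset \mathbf{cl}_{\widetilde{M(R)}}(I)$, I would pick $r\in \mathbb{j}(I)$ and a basic neighborhood $\sigma_k$ of $r$, and show $\sigma_k\cap I\neq\emptyset$. The key step is the claim that $I+\langle k\rangle=R$: if it failed, some maximal ideal $\mathfrak{m}\supset I+\langle k\rangle$ would contain $I$, hence contain $r$ by definition of $\mathbb{j}(I)$, giving $R=\langle r\rangle+\langle k\rangle\subset\mathfrak{m}$, a contradiction. From $I+\langle k\rangle=R$ I get $x\in I$ and $y\in R$ with $x+yk=1$, so $\langle x\rangle+\langle k\rangle=R$ and thus $x\in \sigma_k\cap I$, witnessing $r\in\mathbf{cl}_{\widetilde{M(R)}}(I)$.

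For $\mathbf{cl}_{\widetilde{M(R)}}(I)\subset \mathbb{j}(I)$, I would take $r\notin \mathbb{j}(I)$ and a maximal ideal $\mathfrak{m}\supset I$ with $r\notin\mathfrak{m}$. Maximality gives $\alpha r+m=1$ for some $m\in\mathfrak{m}$, so $r\in\sigma_m$. Exactly as in Proposition \ref{propclausure}, $\sigma_m\cap\mathfrak{m}=\emptyset$ (any common $y$ would give $R=\langle y\rangle+\langle m\rangle\subset\mathfrak{m}$), and since $I\subset\mathfrak{m}$ this forces $\sigma_m\cap I=\emptyset$, so $r\notin \mathbf{cl}_{\widetilde{M(R)}}(I)$.

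The statement for $M(R)$ follows by the same reasoning with $\sigma_k^0$ in place of $\sigma_k$; the only bookkeeping issue is whether the witness $x\in \sigma_k\cap I$ produced above lies in $R^0$. The identity $x+yk=1$ forces $x=0$ only when $k$ is a unit, and in that degenerate case $\sigma_k^0=R^0$ already meets $I^0$ provided $I\neq\{0\}$, so the argument goes through for every nonzero proper ideal. I do not anticipate a genuine obstacle beyond this zero-element bookkeeping, since the maximal-ideal manipulations are identical to those already carried out in the principal case.
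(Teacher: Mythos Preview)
Your proposal is correct and is exactly the approach the paper takes: its entire proof reads ``Replace $\langle x\rangle$ with $I$ in the proof of Proposition \ref{propclausure}.'' You have carried out that substitution carefully, including the necessary adjustment in the first containment (producing a witness $x\in I\cap\sigma_k$ from $I+\langle k\rangle=R$, rather than just invoking $x\in\sigma_k$ as in the principal case), and your remark about the $I=\{0\}$ bookkeeping in $M(R)$ is a detail the paper leaves implicit.
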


\begin{proof}
    Replace $\langle x \rangle$ with $I$ in the proof of Proposition \ref{propclausure}.
\end{proof}

We say two ideals $I$ and $J$ of $R$ are said to be \textit{comaximal}, if the ideal generated by $I$ and $J$ is not proper, that is $I+J=R$. On the other hand, we say that $x,y\in R$ are \textit{coprime}, if they do not have any common nonzero nounit factor or the greatest common factor is $1$ (or a unit). The comaximality always implies coprimality, but it is well known that the converse is not always true. For example, the greatest common factor of $x$ and $2$ is $1$ in $\mathbb{Z}[x]$, but $1$ cannot be represented as a linear combination of $x$ and $2$. In the terms of algebraic structures, the above example indicates that a Bezout domain (an integral domain in which every finitely generated ideal is principal), is a GCD domain (an integral domain with the property that for any two elements $x$ and $y$, not both the additive identity, their greatest common factor exist).
\begin{theorem}\label{th3}
Let $R$ be an integral domain where coprimality implies comaximality ($\gcd(x,y) = 1 \Rightarrow \langle x \rangle + \langle y \rangle = R$). If $p \in R$ is a prime element, then $R \setminus \sigma_p = \langle p \rangle$.
\end{theorem}
\begin{proof}
By Theorem \ref{th2}, we know that $\langle p \rangle \subset R \setminus \sigma_p$. Now, if $x_0 \in R \setminus \sigma_p$, then $x_0 \notin \sigma_p$. That is, $x_0$ is not comaximal with $p$, and hence $x_0$ is not coprime with $p$. Since $p$ is prime, it necessarily follows that $x_0 \in \langle p \rangle$. Therefore, $R \setminus \sigma_p \subset \langle p \rangle$.
\end{proof}

\begin{example}
Theorem \ref{th3}, along with Theorem \ref{th2}, gives us a \textit{more direct} proof of \cite[Theorem 3.1]{MACIAS2024109070}. Of course, if $R$ is a principal ideal domain, then coprimality implies comaximality. Therefore, in this case, for every irreducible element $p \in R$, by virtue of the aforementioned theorems, $\mathbf{cl}_{\widetilde{M(R)}}(\{p\}) = \langle p \rangle$.
\end{example}

\begin{theorem}\label{th4}
Let $R$ be an integral domain with the property that for every element $x \in R^\#$, it holds that $R \setminus \sigma_x = \langle x \rangle$, then coprimality implies comaximality.
\end{theorem}

\begin{proof}
Let $x,y \in R^\#$ be such that $\gcd(x,y) = 1$. If $x$ and $y$ are not comaximal, then $x \notin \sigma_y$ and $y \notin \sigma_x$. Then, $x \in R \setminus \sigma_y = \langle y \rangle$ and $y \in R \setminus \sigma_x = \langle x \rangle$, which cannot be possible, because $x$ and $y$ are coprime. Therefore, coprimality implies comaximality.
\end{proof}
Let us recall that an atomic domain is an integral domain with the property that every nonzero nonunit element can be written as a finite product of irreducible elements.
\begin{theorem}\label{th5}
Let $R$ be an atomic domain. If for every irreducible element $p \in R$, it holds that $R \setminus \sigma_p = \langle p \rangle$, then coprimality implies comaximality.
\end{theorem}

\begin{proof}
Suppose that $R \setminus \sigma_p = \langle p \rangle$ for every irreducible element $p \in R$. Let $x, y \in R^\times$ be coprime ($\gcd(x,y) = 1$). Since $R$ is a unique factorization domain, we can write $x = p_1^{\alpha_1} \cdot p_2^{\alpha_2} \cdots p_n^{\alpha_n}$ and $y = q_1^{\beta_1} \cdot q_2^{\beta_2} \cdots q_m^{\beta_m}$. Since $x$ and $y$ are coprime, each $p_i$ is coprime to each $q_j$. Now, by Theorem \ref{thpropiedades}, item (1), we have that $R \setminus \sigma_x = \bigcup_{i=1}^n R \setminus \sigma_{p_i}$, and similarly, $R \setminus \sigma_y = \bigcup_{i=1}^m R \setminus \sigma_{q_i}$. Then, by hypothesis, $R \setminus \sigma_x = \bigcup_{i=1}^n \langle p_i \rangle$ and $R \setminus \sigma_y = \bigcup_{i=1}^m \langle q_i \rangle$. If we assume that $x$ and $y$ are not comaximal, then $x \notin \sigma_y$ and $y \notin \sigma_x$, which would imply that $x \in \langle q_i \rangle$ for some $i$ and $y \in \langle p_j \rangle$ for some $j$, which is absurd. Therefore, coprimality implies comaximality. The converse follows from Theorem \ref{th3}.
\end{proof}

Notice that in the previous proof, there is no need to assume that the factorizations in irreducible elements are finite. Hence, we can use the atomic domain as defined by PM Cohn in \cite{cohn1968bezout}. Moreover, if $R$ is a unique factorization domain, the converse also holds. On the other hand, it is well known that if $R$ is a unique factorization domain, then $R$ is a principal ideal domain if and only if coprimality implies comaximality.

\begin{corollary}\label{corcara}
If $R$ is a unique factorization domain, then $R$ is a principal ideal domain if and only if for every irreducible element $p \in R$, it holds that $\langle p \rangle = \mathbf{cl}_{\widetilde{M(R)}}(\{p\}) = R \setminus \sigma_p$.
\end{corollary}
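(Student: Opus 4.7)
The plan is to derive this corollary almost immediately by stringing together three ingredients that are already in place: Theorem \ref{th2} (the sandwich $\langle x \rangle \subset \mathbf{cl}_{\widetilde{M(R)}}(\{x\}) \subset R\setminus \sigma_x$), Theorem \ref{th5} (for UFDs, $R\setminus \sigma_p = \langle p\rangle$ for every irreducible $p$ iff coprimality implies comaximality), and the classical fact stated right before the corollary that in a UFD, being a PID is equivalent to coprimality implying comaximality. With these in hand there is essentially no work left beyond bookkeeping.

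For the forward direction I would assume $R$ is a PID. The classical fact then gives that coprimality implies comaximality, so Theorem \ref{th5} tells me $R\setminus \sigma_p = \langle p\rangle$ for every irreducible $p$. Plugging this into the sandwich of Theorem \ref{th2} yields
\begin{equation*}
\langle p\rangle \;\subset\; \mathbf{cl}_{\widetilde{M(R)}}(\{p\}) \;\subset\; R\setminus \sigma_p \;=\; \langle p\rangle,
\end{equation*}
which collapses to the desired triple equality.

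For the converse I would assume $\langle p\rangle = \mathbf{cl}_{\widetilde{M(R)}}(\{p\}) = R\setminus \sigma_p$ for every irreducible $p$, retain only the outer equality $R\setminus \sigma_p = \langle p\rangle$, feed it into Theorem \ref{th5} to conclude that coprimality implies comaximality in $R$, and then invoke the classical characterization to conclude that the UFD $R$ is a PID. The only thing to be careful about is to make sure the quantifier over irreducible elements is preserved when moving between the hypothesis of the corollary and the hypothesis of Theorem \ref{th5}; aside from that, I do not anticipate any real obstacle, since this corollary is really just the natural topological repackaging of Theorem \ref{th5} combined with Theorem \ref{th2}.
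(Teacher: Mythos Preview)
Your proposal is correct and matches the paper's intended argument: the corollary is stated without proof precisely because it is the immediate combination of Theorem~\ref{th5}, the sandwich of Theorem~\ref{th2}, and the classical UFD/PID characterization mentioned just before it. Your bookkeeping of the quantifiers and of which equality is actually needed in each direction is exactly right.
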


To conclude this section, we would like to propose the following problems.

\begin{problem}\label{pr1}
We know that in a principal ideal domain, the closure in $\widetilde{M(R)}$ of an irreducible is the ideal generated by the irreducible. In what other structures does this hold?
\end{problem}

\begin{problem}\label{pr2}
Is there an $R$ such that every principal ideal is closed in $\widetilde{M(R)}$? Note, for example, that if $R = \mathbb{Z}$, then $\langle 8 \rangle$ is not closed in $\widetilde{M(R)}$, since $\mathbf{cl}_{\widetilde{M(R)}}(\{8\}) =\mathbf{cl}_{\widetilde{M(R)}}(\langle 8\rangle)= \langle 2 \rangle $.
\end{problem}

\section{On the Infinitude of Non-associated Irreducible Elements and Maximal Ideals}\label{section4}

In this section, we aim to present a topological proof (using the space $M(R)$) of the following propositions.
\begin{proposition}\cite[Theorem 14]{knopemacher1997topologies}\label{pro1}
If $R$ is such that $\# R^\times < \# R$, then the set of maximal ideals of $R$ is infinite.
\end{proposition}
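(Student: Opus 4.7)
My plan is to argue by contradiction: assume $\mathfrak{M} = \{\mathfrak{m}_1,\dots,\mathfrak{m}_n\}$ is finite and derive $\#R^\times \geq \#R$. Since $R$ is not a field, each $\mathfrak{m}_i$ is nonzero, so I pick $y_i \in \mathfrak{m}_i \setminus \{0\}$ and form $y := y_1 y_2 \cdots y_n$. Because $R$ is an integral domain, $y$ is a nonzero non-unit (products of nonzero elements are nonzero, and a product of non-units cannot be a unit). The first step is to identify $R^\times$ as a single basic open set in $M(R)$: by Theorem \ref{thpropiedades}(1) applied repeatedly, $\sigma_y^0 = \sigma_{y_1}^0 \cap \cdots \cap \sigma_{y_n}^0$, and I would verify that this intersection equals $R^\times$. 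The inclusion $R^\times \subset \sigma_y^0$ is immediate, and for the converse, any non-unit $s \in \sigma_y^0$ would lie in some $\mathfrak{m}_j$, so $\langle y\rangle + \langle s\rangle \subset \mathfrak{m}_j$ (using $y_j \mid y$), contradicting $\langle y\rangle + \langle s\rangle = R$. Hence, under the finiteness assumption, $R^\times = \sigma_y^0 \in \beta_{R^0}$.

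The second step is a cardinality lower bound on this basic open set. For every $a \in R$, the element $1+ay$ is nonzero (since $y$ is a non-unit, no $a$ can satisfy $ay=-1$) and satisfies the B\'ezout identity $1\cdot(1+ay) + (-a)\cdot y = 1$, so it belongs to $\sigma_y^0$. The assignment $a \mapsto 1+ay$ is injective because $R$ is a domain and $y\neq 0$, and therefore it embeds $R$ into $\sigma_y^0$. Combining, $\#R \leq \#\sigma_y^0 = \#R^\times$, contradicting the hypothesis $\#R^\times < \#R$.

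The main conceptual content lies in translating the combinatorial hypothesis ``finitely many maximal ideals'' into the topological statement ``$R^\times$ is a basic open set of $M(R)$'', which in turn rests on the multiplicativity of the basis (Theorem \ref{thpropiedades}(1)) and on $R$ being a domain so that $y$ is genuinely a nonzero non-unit; the remaining cardinality count via the coset $1+yR$ is short and uniform in $y$. Hyperconnectedness of $M(R)$ plays only a background role here: it guarantees every $\sigma_k^0$ is nonempty and dense, and the lower bound $\#\sigma_k^0 \geq \#R$ proved along the way can be regarded as a quantitative refinement of that fact.
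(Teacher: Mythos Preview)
Your proof is correct, but it follows a different route from the paper's. You argue constructively: from a putative finite list $\mathfrak{m}_1,\dots,\mathfrak{m}_n$ you build a single element $y=y_1\cdots y_n$ and show directly that $\sigma_y^0=R^\times$, then invoke the injection $a\mapsto 1+ay$ to force $\#R\le\#R^\times$. The paper instead uses its hyperconnectedness machinery: it shows (Lemma~\ref{lem2}) that every proper ideal is nowhere dense in $M(R)$ because $M(R)$ is hyperconnected, observes that $R^\#=\bigcup_{\mathfrak{m}\in\mathfrak{M}}\mathfrak{m}^0$, and concludes via Lemma~\ref{lm4} that a finite $\mathfrak{M}$ would make $R^\#$ nowhere dense, contradicting its density (Lemma~\ref{lem 1}). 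Your argument is more elementary and essentially the classical one---indeed it is close in spirit to the approach of \cite[Subsection~3.16]{MACIAS2024109070} that the paper explicitly sets out to replace---whereas the paper's argument is designed to highlight hyperconnectedness as the engine and to provide a template that also runs, with minimal change, for Proposition~\ref{pro2} on irreducibles. Your closing remark that hyperconnectedness ``plays only a background role'' is accurate for your own proof but would mischaracterize the paper's, where it is the central tool.
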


\begin{proposition}\cite[Theorem 3.1]{clark2017euclidean}\label{pro2}
If $R$ is a \textit{Furstenberg domain} with $\# R^\times < \# R$, then $R$ has an infinite number of non-associated irreducible elements. 
\end{proposition}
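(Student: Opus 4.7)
The plan is to argue by contradiction, in parallel with the strategy for Proposition \ref{pro1} but with the role of maximal ideals played by the finitely many candidate irreducibles, and with the Furstenberg hypothesis providing the crucial bridge. Assume $R$ is a Furstenberg domain with $\#R^\times<\#R$, and suppose for contradiction that $R$ has only finitely many non-associated irreducible elements $p_1,\dots,p_n$; at least one such irreducible exists because $R$ is not a field and the Furstenberg property then guarantees an irreducible divisor of any nonzero nonunit. Let $\pi=p_1p_2\cdots p_n$. Since $R$ is an integral domain and each $p_i$ is a nonunit, $\pi\in R^\#$. I will show that the basic open set $\sigma_\pi^0$ of $M(R)$ collapses to $R^\times$, and this is impossible on cardinality grounds.

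By items (1) and (4) of Theorem \ref{thpropiedades}, $\sigma_\pi^0=\sigma_\pi=\sigma_{p_1}\cap\cdots\cap\sigma_{p_n}$, and by item (3), $R^\times\subseteq\sigma_\pi$. For the reverse inclusion, take $s\in\sigma_\pi$ and suppose toward contradiction that $s\in R^\#$. The Furstenberg hypothesis supplies an irreducible divisor of $s$, which up to a unit is some $p_i$, so $\langle s\rangle\subseteq\langle p_i\rangle$, giving $\langle s\rangle+\langle p_i\rangle=\langle p_i\rangle\neq R$ and contradicting $s\in\sigma_{p_i}$. Hence $\sigma_\pi^0=R^\times$.

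To extract the contradiction I would observe that the coset $1+\langle\pi\rangle$ sits inside $\sigma_\pi$: for any $t\in R$, the identity $1=(1+\pi t)-\pi t$ shows $\langle 1+\pi t\rangle+\langle\pi\rangle=R$. Because $\pi\neq 0$ in the integral domain $R$, the map $t\mapsto 1+\pi t$ is injective, so $\#(1+\langle\pi\rangle)=\#R$, whence $\#\sigma_\pi^0\geq\#R$. This contradicts $\sigma_\pi^0=R^\times$ together with $\#R^\times<\#R$.

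I expect the main obstacle to be the inclusion $\sigma_\pi\subseteq R^\times$: this is precisely where the Furstenberg hypothesis is used, and without it a nonunit $s$ could avoid every $\langle p_i\rangle$, leaving $\sigma_\pi$ strictly larger than $R^\times$ and breaking the cardinality argument. The hyperconnectedness of $M(R)$ is the philosophical backdrop --- a proper non-empty open set in a hyperconnected space is dense and cannot be ``too sparse'' --- but the concrete contradiction is delivered by the coset-cardinality estimate rather than by a direct appeal to hyperconnectedness.
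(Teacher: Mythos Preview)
Your proof is correct. The key inclusion $\sigma_\pi\subseteq R^\times$ is exactly where the Furstenberg hypothesis enters, as you note, and the coset injection $t\mapsto 1+\pi t$ gives the cardinality contradiction cleanly.

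However, your route is genuinely different from the paper's. The paper argues on the complementary side: it writes $R^\#=\bigcup_{p\in\mathcal{I}}(R^0\setminus\sigma_p^0)$ (Lemma~\ref{lemadenso}), observes that each $R^0\setminus\sigma_p^0$ is a proper closed set and hence nowhere dense by hyperconnectedness of $M(R)$, and then concludes that a finite $\mathcal{I}$ would force $R^\#$ to be nowhere dense, contradicting Lemma~\ref{lem 1}. Your argument is the open-set dual of this: instead of showing $R^\#$ is too small (nowhere dense), you show its complement $R^\times$ is too large (a basic open set), and you reach the contradiction directly via cardinality rather than via the dense/nowhere-dense dichotomy. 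In effect you have unpacked and recombined the ingredients of Theorem~\ref{th7} and Lemma~\ref{lemadenso} into a single Euclid-style step, bypassing Lemma~\ref{lm4} and the explicit appeal to hyperconnectedness. This is more elementary and closer in spirit to the original arguments of Furstenberg and Clark; what the paper's approach buys is a uniform template (dense set versus finite union of nowhere dense sets) that applies verbatim to both Proposition~\ref{pro1} and Proposition~\ref{pro2} and foregrounds the topological structure of $M(R)$.
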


It is worth mentioning that we will use a completely different argument than the one presented in \cite[Subsection 3.16]{MACIAS2024109070}. Since it does not used the results in Corollary \ref{corcara}, that forces $R$ to be a principal ideal domain.  Furthermore, as suggested by those same results, there is not much control over the closure operator in arbitrary integral domains (that are not fields).

Before we proceed with the proofs of the propositions, consider the following results.

\begin{theorem}\label{th6}
If the set $R^\times$ is not open in $\widetilde{M(R)}$, then the set $R^\#$ is dense in $\widetilde{M(R)}$. Similarly, $R^\#$ is not dense in $M(R)$.
\end{theorem}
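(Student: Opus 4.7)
The plan is to establish both halves by proving the contrapositive: starting from the assumption that $R^\#$ is not dense in the relevant space, I would derive that $R^\times$ is open. Everything hinges on one structural observation that I would state at the outset, namely that every basic open set of either topology contains the entire set of units. Indeed, for $u \in R^\times$ one has $\langle u\rangle = R$, so $\langle u\rangle + \langle r\rangle = R$ holds trivially and $u \in \sigma_r$ for every $r \in R$; the same inclusion $R^\times \subseteq \sigma_k^0$ holds for every basic set of $\beta_{R^0}$. This single fact is the engine of the whole argument.

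For the $M(R)$ statement I would argue as follows. Suppose $R^\#$ is not dense; then some nonempty basic open $\sigma_k^0$ is disjoint from $R^\#$. Since the ambient space decomposes as $R^0 = R^\times \sqcup R^\#$, this disjointness yields $\sigma_k^0 \subseteq R^\times$. Combined with the reverse inclusion from the opening observation, this forces $\sigma_k^0 = R^\times$, so $R^\times$ is itself a basic open set and in particular open. This is exactly the contrapositive of the $M(R)$ claim.

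For $\widetilde{M(R)}$ I would run the same argument, with one extra step to accommodate the element $0$. The ambient decomposition is now $R = R^\times \sqcup R^\# \sqcup \{0\}$, so a basic open $\sigma_r$ disjoint from $R^\#$ only lies in $R^\times \cup \{0\}$. I would then invoke Theorem \ref{thpropiedades}(4) to note that $0 \in \sigma_r$ forces $r \in R^\times$, and Theorem \ref{thpropiedades}(2) to conclude that in that case $\sigma_r = R$, which contradicts disjointness from $R^\#$ (nonempty since $R$ is not a field). Ruling this out gives $0 \notin \sigma_r$, hence $\sigma_r = R^\times$, as in the $M(R)$ case, completing the proof.

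The main obstacle is not the principal line of reasoning, which is essentially one line in each case, but rather the bookkeeping around the point $0$ in $\widetilde{M(R)}$: one has to rule out the degenerate possibility that the witnessing basic set is all of $R$, and it is precisely there that the standing assumption that $R$ is not a field enters.
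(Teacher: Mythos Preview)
Your argument is correct and follows the same contrapositive route as the paper: assume a basic open set misses $R^\#$, use Theorem~\ref{thpropiedades}(3) to get the reverse inclusion $R^\times \subseteq \sigma_r$, and conclude $R^\times = \sigma_r$ is open. Your handling of the point $0$ in $\widetilde{M(R)}$ via items~(2) and~(4) is in fact more careful than the paper's own proof, which passes directly from $\sigma_r \cap R^\# = \emptyset$ to $\sigma_r \subset R^\times$ without explicitly excluding~$0$.
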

\begin{proof}
    If there exists $\sigma_r \in \mathcal{B}_R$ such that $\sigma_r \cap R^\# = \emptyset$, then $\sigma_r \subset R^\times$. Thus $R^\times = \sigma_r$ (see Theorem \ref{thpropiedades} item (3)) would be open in $\widetilde{M(R)}$, which is a contradiction. Thus, $R^\#$ is dense in $\widetilde{M(R)}$.
\end{proof}

\begin{theorem}\label{th7}
If $\# R^\times < \# R$, then the set $R^\times$ is not open in $\widetilde{M(R)}$. Similarly, $R^\times$ is not open in $M(R)$.
\end{theorem}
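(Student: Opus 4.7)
The plan is to prove the contrapositive: if $R^\times$ is open in $\widetilde{M(R)}$ (respectively $M(R)$), then $\#R^\times \ge \#R$. Openness of $R^\times$ together with $1 \in R^\times$ yields a basic open set $\sigma_r$ with $1 \in \sigma_r \subset R^\times$, where we may take $r \ne 0$ (the degenerate index $r=0$ gives $\sigma_0 = R^\times$ trivially and carries no cardinality information). If $r$ were a unit, Theorem \ref{thpropiedades}(2) would force $\sigma_r = R$, hence $R \subset R^\times$, contradicting the standing assumption that $R$ is not a field. So we may assume $r \in R^\#$.

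The key step is to exhibit an injection $\phi\colon R \hookrightarrow \sigma_r$ via the affine shift $\phi(s) = 1 + rs$. The identity $(1+rs) + (-s)\, r = 1$ witnesses $\langle 1+rs\rangle + \langle r\rangle = R$, so $\phi(s) \in \sigma_r$. Injectivity is immediate: $\phi(s_1) = \phi(s_2)$ gives $r(s_1 - s_2) = 0$, and since $R$ is an integral domain with $r \ne 0$, we conclude $s_1 = s_2$. Therefore $\#\sigma_r \ge \#R$, and combined with $\sigma_r \subset R^\times$ this forces $\#R^\times \ge \#R$, contradicting the hypothesis $\#R^\times < \#R$.

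For the $M(R)$ assertion, the same map $\phi$ actually lands in $\sigma_r^0 = \sigma_r \cap R^0$, because $1 + rs = 0$ would force $r(-s) = 1$, making $r$ a unit with inverse $-s$, contrary to $r \in R^\#$. I do not anticipate a genuine obstacle: the proof reduces to the affine-shift trick $s \mapsto 1 + rs$, and the hypotheses (no zero divisors, $r \ne 0$, $R$ not a field) are precisely what make the injection and the subsequent cardinality comparison go through.
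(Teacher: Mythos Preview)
Your proof is correct and follows essentially the same route as the paper: both exhibit the injection $s \mapsto 1 + rs$ from $R$ into $\sigma_r$ (for $r \neq 0$) to conclude that nonzero-indexed basic opens have cardinality $\#R$, which is incompatible with $\sigma_r \subset R^\times$ under the hypothesis $\#R^\times < \#R$. You frame this as a contrapositive and add a couple of extra verifications (that $1+rs \in \sigma_r$ via an explicit B\'ezout identity, and that the image avoids $0$ for the $M(R)$ case), but the core argument is identical to the paper's.
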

\begin{proof}
 Let $k \in R^0$. Since $R$ has no zero divisors, the function $\iota: R \to \sigma_k$ given by $r \to 1 + rk$ is an injection, and so $|R| \leq |\sigma_k| \leq |R|$. Consequently, every open set in $\widetilde{M(R)}$ has the cardinality of $R$, so $R^\times$ cannot be open in $\widetilde{M(R)}$.
\end{proof}

These results lead to the following lemmas, which plays an important role in the proof of the propositions.

\begin{lemma}\label{lem 1}
If $\# R^\times < \# R$, then the set $R^\#$ is dense in $\widetilde{M(R)}$ and in $M(R)$.
\end{lemma}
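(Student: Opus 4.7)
The plan is to obtain this lemma as a direct chaining of the two preceding theorems: Theorem \ref{th7} converts the cardinality hypothesis into the non-openness of $R^\times$, and Theorem \ref{th6} converts that non-openness into the density of $R^\#$. Because both Theorem \ref{th6} and Theorem \ref{th7} are stated for $\widetilde{M(R)}$ with a companion assertion for $M(R)$, the argument for the two spaces is parallel and will be handled together.

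Concretely, I would first invoke Theorem \ref{th7} to conclude from $\# R^\times < \# R$ that $R^\times$ is open neither in $\widetilde{M(R)}$ nor in $M(R)$. The content of that step, which I would not re-prove, is that the map $r \mapsto 1 + rk$ injects $R$ into $\sigma_k$ for any $k \in R^0$, so every nonempty basic open has cardinality $\# R$; since $R^\times$ is a strictly smaller set, it cannot contain a nonempty basic open and hence cannot itself be written as a union of such sets. Next, I would apply Theorem \ref{th6}: its proof is really a contrapositive, showing that if some basic open $\sigma_r$ misses $R^\#$ then $\sigma_r \subset R^\times$, and by Theorem \ref{thpropiedades}(3) one actually has $\sigma_r = R^\times$, so $R^\times$ would be open. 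Contrapositively, the non-openness of $R^\times$ guaranteed by the previous step forces every basic open set to meet $R^\#$, which is precisely the density of $R^\#$.

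The one thing worth double-checking, and the only place I expect any friction, is that the $M(R)$ version of the chain really is verbatim: the injection $r \mapsto 1+rk$ takes values in $\sigma_k$, but one needs to land in $\sigma_k^0$. Since the value $0$ is attained only when $rk = -1$, i.e. at most for a single $r$, the image into $\sigma_k^0$ still has cardinality $\# R$, so the cardinality step survives in the subspace; and the argument of Theorem \ref{th6} applies in $M(R)$ with $\sigma_r^0$ in place of $\sigma_r$ since $R^\times \subset \sigma_r^0$ still holds by Theorem \ref{thpropiedades}(3). With those matches in place, the lemma follows in one line for each of $\widetilde{M(R)}$ and $M(R)$, with no further obstacle beyond the packaging of the two earlier theorems.
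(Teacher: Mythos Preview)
Your proposal is correct and matches the paper's approach exactly: the paper presents this lemma without proof, introducing it with ``These results lead to the following lemma,'' i.e., as the immediate concatenation of Theorem~\ref{th7} and Theorem~\ref{th6}, precisely as you outline. Your extra care about the $M(R)$ case (that $r\mapsto 1+rk$ still yields $\#\sigma_k^0=\#R$) is a harmless refinement the paper leaves implicit.
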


\begin{lemma}\cite[Problem 25A]{willard2012general}\label{lm4}
The union of finitely many nowhere dense subsets of a topological space is nowhere dense.
\end{lemma}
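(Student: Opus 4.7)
The plan is to reduce the statement to the two-set case by an obvious induction on the number of sets, and then prove the two-set case using the dense-open characterization of nowhere density. Recall that a set $A$ is nowhere dense in $X$ exactly when $X\setminus\overline{A}$ is dense and open, equivalently when every nonempty open set contains a nonempty open subset disjoint from $A$.

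The induction base (one set) is trivial, and the inductive step $n\to n+1$ is immediate once the case $n=2$ is established, using that the union $A_1\cup\cdots\cup A_n\cup A_{n+1}$ is the union of the two nowhere dense sets $A_1\cup\cdots\cup A_n$ and $A_{n+1}$. So the entire content lies in showing: if $A$ and $B$ are nowhere dense in $X$, then so is $A\cup B$.

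For this, the cleanest route is to argue by contradiction. Suppose $A\cup B$ is not nowhere dense, so there exists a nonempty open set $U\subset\overline{A\cup B}=\overline{A}\cup\overline{B}$. Since $A$ is nowhere dense, $X\setminus\overline{A}$ is dense, so $V:=U\cap(X\setminus\overline{A})$ is a nonempty open set. By construction $V\cap\overline{A}=\emptyset$, yet $V\subset U\subset\overline{A}\cup\overline{B}$, which forces $V\subset\overline{B}$. This produces a nonempty open subset of $\overline{B}$, contradicting the nowhere density of $B$.

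There is no real obstacle here: the only subtlety is remembering to use the identity $\overline{A\cup B}=\overline{A}\cup\overline{B}$ (which holds in any topological space for finite unions) so that the complement of $\overline{A\cup B}$ factors as $(X\setminus\overline{A})\cap(X\setminus\overline{B})$. With that identity in hand, the argument is essentially the standard proof that the intersection of two dense open sets is dense, and the lemma follows.
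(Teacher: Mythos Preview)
Your argument is correct. The reduction to the two-set case by induction is clean, and the contradiction for $n=2$ via $\overline{A\cup B}=\overline{A}\cup\overline{B}$ and the density of $X\setminus\overline{A}$ is standard and airtight.

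The paper's proof takes a slightly different route: it works directly with all $k$ sets at once using the characterization ``every nonempty open set contains a nonempty open subset disjoint from $N_i$,'' shrinking a given open set successively past each $N_i$. Your approach instead isolates the two-set case and argues via closures and dense complements rather than by explicit open refinement. Both are short; yours is arguably more transparent because the identity $\overline{A\cup B}=\overline{A}\cup\overline{B}$ makes the interaction between the two sets explicit, whereas the paper's direct argument requires that the open subsets $\mathcal{O}_i$ be chosen iteratively (each inside the previous one) to ensure the final open set avoids every $N_i$ simultaneously.
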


\begin{proof}

Let $Y$ be a topological space. Let $N_1, N_2, \dots, N_k$ be nowhere dense subsets in $Y$. Consider the set $N := \displaystyle\bigcup_{i=1}^k N_i$. Let $\mathcal{O}$ be a non-empty open set in $Y$. Then, by the definition of a nowhere dense set, for $N_1$ there exists a non-empty open set $\mathcal{O}_1$ in $Y$ such that $\mathcal{O}_1 \subset \mathcal{O}$ and $N_1 \cap \mathcal{O}_1 = \emptyset$. Similarly, for $\mathcal{O}_1$ there exists a non-empty open set $\mathcal{O}_2 \subset \mathcal{O}_1$ such that $\mathcal{O}_2 \cap N_2 = \emptyset$. Continue this process to construct the chain of non-empty open sets
\begin{equation*}
    \mathcal{O}_k \subset \mathcal{O}_{k-1} \subset \cdots \subset \mathcal{O}_2 \subset \mathcal{O}_1 \subset \mathcal{O},
\end{equation*}
such that $\mathcal{O}_i \cap N_i = \emptyset$ for each $i = 1, 2, \dots, k$. Note that, in this way, $\mathcal{O}_k \cap N_i = \emptyset$ for each $i = 1, 2, \dots, k$ and $\mathcal{O}_k \subset \mathcal{O}$. Consequently, $N \cap \mathcal{O}_k = \emptyset$. Therefore, $N$ is nowhere dense in $Y$.
\end{proof}

\subsection{Proof of Proposition \ref{pro1}}\label{subsection4.1}

We begin with the following lemma.

\begin{lemma}\label{lem2}
Let $I$ be a proper ideal of $R$. Then $I$ is nowhere dense in $\widetilde{M(R)}$. Similarly, $I^0$ is nowhere dense in $M(R)$.
\end{lemma}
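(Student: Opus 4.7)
The plan is to show that $\mathrm{Int}_{\widetilde{M(R)}}\!\bigl(\mathrm{cl}_{\widetilde{M(R)}}(I)\bigr)=\emptyset$ by combining two facts already available in the excerpt: the explicit description of the closure of an ideal, and the observation that every basic open set contains $1_R$.

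First, I would invoke the proposition just above (the one with proof ``Replace $\langle x\rangle$ with $I$ in the proof of Proposition \ref{propclausure}'') to get
\[
\mathrm{cl}_{\widetilde{M(R)}}(I)=\mathbb{j}(I)=\bigcap_{\substack{\mathfrak{m}\in\mathfrak{M}\\ I\subset\mathfrak{m}}}\mathfrak{m}.
\]
Since $I$ is proper, Zorn's lemma provides at least one maximal ideal containing $I$, hence $\mathbb{j}(I)$ is itself a proper ideal of $R$; in particular $1_R\notin\mathbb{j}(I)$.

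Next, I would suppose for contradiction that $\mathrm{cl}_{\widetilde{M(R)}}(I)$ has non-empty interior. Then it contains some non-empty basic open set $\sigma_r\in\beta_R$. But for every $r\in R$ we have $\langle 1_R\rangle+\langle r\rangle=R$, so $1_R\in\sigma_r$. Therefore $1_R\in\mathbb{j}(I)$, contradicting that $\mathbb{j}(I)$ is proper. Hence the interior of the closure is empty, i.e.\ $I$ is nowhere dense in $\widetilde{M(R)}$.

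For the $M(R)$ statement the argument is identical with only cosmetic changes: use $\mathrm{cl}_{M(R)}(I^0)=\mathbb{j}(I)^0$, and note that each basic open $\sigma_k^0$ with $k\in R^0$ still contains $1_R$ since $1_R\in\sigma_k$ and $1_R\neq 0_R$. I do not foresee any real obstacle here; the only subtle point is making sure the closure formula from the preceding proposition applies to $I$ (which requires $I$ to be a proper ideal so that $\mathbb{j}(I)$ is defined as a non-vacuous intersection), and that the trivial membership $1_R\in\sigma_r$ is used for every basic open rather than argued case by case.
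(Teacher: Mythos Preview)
Your proof is correct but follows a different route from the paper's. The paper argues by contradiction that $I$ cannot be \emph{dense}: picking $x\in I$ and using density to find $y\in\sigma_x\cap I$ yields $R=\langle x\rangle+\langle y\rangle\subset I$, a contradiction; it then invokes hyperconnectedness of $\widetilde{M(R)}$ (Theorem~\ref{thpropiedades}(9)) to conclude that any non-dense set is nowhere dense. Your argument instead computes the closure explicitly via the formula $\mathrm{cl}_{\widetilde{M(R)}}(I)=\mathbb{j}(I)$ and exploits the fact that $1_R$ lies in every basic open $\sigma_r$, so a proper ideal can never contain a nonempty open set. Your approach avoids hyperconnectedness entirely but leans on the heavier machinery of Proposition~\ref{propclausure}; the paper's approach is more self-contained at this point in the exposition but depends on the dense/nowhere-dense dichotomy for irreducible spaces. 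Note, incidentally, that your argument could be made independent of the closure formula: for any $x\in I$ one has $\sigma_x\cap I=\emptyset$ (else $I=R$ as in the paper's computation), so $1_R\in\sigma_x$ witnesses $1_R\notin\mathrm{cl}_{\widetilde{M(R)}}(I)$ directly.
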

\begin{proof}
Suppose that $I$ is dense in $\widetilde{M(R)}$. Let $x \in I$. Since $I$ is dense in $\widetilde{M(R)}$, $\sigma_x \cap I \neq \emptyset$. Let $y \in \sigma_x \cap I$. Then,
\begin{equation*}
R = \langle y \rangle + \langle x \rangle \subset \langle x \rangle + \langle x \rangle = \langle x \rangle \subset I \subset R,
\end{equation*}
which implies that $I = R$, a contradiction. Therefore, $I$ cannot be dense in $\widetilde{M(R)}$. Since $\widetilde{M(R)}$ is hyperconnected (see Theorem \ref{thpropiedades} item (9)), $I$ must necessarily be nowhere dense in $\widetilde{M(R)}$.
\end{proof}

Now we are ready for the proof of Proposition \ref{pro1}, for which we will work with $M(R)$.

\begin{proof}
Let $\mathfrak{M}$ be the set of maximal ideals of $R$. Every element of $R$ that is not a unit belongs to some maximal ideal of $R$. Thus, we can write

\begin{equation*}
    R^0 = R^\times \cup \bigcup_{\mathfrak{m} \in \mathfrak{M}} \mathfrak{m}^0.
\end{equation*}
Then,

\begin{equation*}
    R^\# = \bigcup_{\mathfrak{m} \in \mathfrak{M}} \mathfrak{m}^0.
\end{equation*}

If $\mathfrak{M}$ were finite, by Lemma \ref{lm4} and Lemma \ref{lem2}, $R^\#$ would be nowhere dense, which is absurd by Lemma \ref{lem 1}.
\end{proof}

Note that by using Theorem \ref{th6} and Lemma \ref{lem2}, we obtain the following theorem.

\begin{theorem}
In general, if $R^\times$ is not open in $M(R)$, then the set of maximal ideals of $R$ is infinite.
\end{theorem}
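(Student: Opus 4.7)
The plan is to mimic the proof of Proposition \ref{pro1} almost verbatim, with the single substitution that density of $R^\#$ now comes straight from the hypothesis via Theorem \ref{th6} rather than from the cardinality condition via Lemma \ref{lem 1}. So I would first invoke Theorem \ref{th6}: since $R^\times$ is not open in $M(R)$, the set $R^\#$ is dense in $M(R)$. This is the whole gain from the hypothesis, and it replaces Lemma \ref{lem 1} in the earlier argument.

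Next I would write
\begin{equation*}
R^\# = \bigcup_{\mathfrak{m} \in \mathfrak{M}} \mathfrak{m}^0,
\end{equation*}
using that every nonzero nonunit element of $R$ lies in some maximal ideal (existence being guaranteed because $R$ is a commutative ring with identity and $R$ is not a field). Now assume, for contradiction, that $\mathfrak{M}$ is finite. By Lemma \ref{lem2}, each $\mathfrak{m}^0$ is nowhere dense in $M(R)$, and by Lemma \ref{lm4}, the finite union $R^\#$ is then nowhere dense in $M(R)$.

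This contradicts the density of $R^\#$ obtained in the first step (noting that $R^\#$ is nonempty since $R$ is not a field, so a nowhere dense set cannot also be dense). Hence $\mathfrak{M}$ must be infinite. There is essentially no obstacle here: the only subtle point worth flagging is that Theorem \ref{th6} must be invoked in its $M(R)$-version (the "Similarly" clause in its statement), and Lemma \ref{lem2} likewise in its $M(R)$-version, so that we stay inside the same ambient space throughout and the dense/nowhere-dense contradiction is legitimate.
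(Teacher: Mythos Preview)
Your proposal is correct and follows essentially the same approach as the paper: the paper simply remarks that the theorem follows by replacing Lemma \ref{lem 1} with Theorem \ref{th6} in the proof of Proposition \ref{pro1}, and then invoking Lemma \ref{lem2} (together with Lemma \ref{lm4}) exactly as you describe. Your observation about consistently working in the $M(R)$-versions of Theorem \ref{th6} and Lemma \ref{lem2} is apt and makes the argument precise.
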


\subsection{Proof of Proposition \ref{pro2}}\label{subsection4.2}

We begin with a definition followed by a lemma.

\begin{definition}\cite{clark2017euclidean}
    A Furstenberg domain is a commutative ring with a multiplicative identity distinct from the associative identity, without zero divisors, in wich every nonzero nonunit is divisible by an irreducible element.
\end{definition}

\begin{lemma}\label{lemadenso}
The set $ R^\#$ is equal to the set $\{R^0 \setminus \sigma_p^0: p \in \mathcal{I}\}$ where $\mathcal{I}$ is the set of non-associated irreducible elements of $R$. 
\end{lemma}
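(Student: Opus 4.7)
The statement as written parses oddly (a set of subsets being equal to a set of elements), so I read it as asserting the identity $R^\# = \bigcup_{p \in \mathcal{I}} (R^0 \setminus \sigma_p^0)$, where $\mathcal{I}$ is a transversal for the associate relation on the irreducibles of $R$. Under this reading the lemma is exactly the decomposition of the nonzero nonunits needed in Section \ref{subsection4.2}, mirroring the role of $R^\# = \bigcup_{\mathfrak{m}} \mathfrak{m}^0$ in Subsection \ref{subsection4.1}. The plan is a straightforward double inclusion driven by the Furstenberg-domain hypothesis and the basic comaximality facts from Theorem \ref{thpropiedades}.

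For the inclusion $\bigcup_{p \in \mathcal{I}} (R^0 \setminus \sigma_p^0) \subset R^\#$, I would fix $x$ in the left-hand side, so $x \in R^0$ and $x \notin \sigma_p^0$ for some irreducible $p \in \mathcal{I}$. I would then argue by contradiction: if $x \in R^\times$, then by Theorem \ref{thpropiedades}(3), $x \in \sigma_r$ for every $r \in R$, and in particular $x \in \sigma_p$; combined with $x \neq 0$ this gives $x \in \sigma_p^0$, contradicting the choice of $x$. Hence $x \in R^0 \setminus R^\times = R^\#$.

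For the reverse inclusion $R^\# \subset \bigcup_{p \in \mathcal{I}} (R^0 \setminus \sigma_p^0)$, I would take $x \in R^\#$ and invoke the Furstenberg-domain hypothesis to produce an irreducible $q \in R$ dividing $x$. Choosing $p \in \mathcal{I}$ with $p$ associate to $q$, we have $\langle q \rangle = \langle p \rangle$ and so $x \in \langle p \rangle$. If one had $x \in \sigma_p^0 \subset \sigma_p$, then $R = \langle x \rangle + \langle p \rangle \subset \langle p \rangle$, forcing $p \in R^\times$ and contradicting irreducibility; hence $x \notin \sigma_p^0$, and since $x \in R^0$ we conclude $x \in R^0 \setminus \sigma_p^0$.

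There is no real obstacle here beyond fixing the interpretation of the right-hand side; the only subtle point is making sure the Furstenberg-domain hypothesis is used in the correct direction, namely to descend from an arbitrary nonzero nonunit to a concrete irreducible divisor, which is precisely where the assumption that $R$ is a Furstenberg domain enters. The rest is the same kind of comaximality bookkeeping used throughout Section \ref{section3}.
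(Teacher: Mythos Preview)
Your reading of the statement is correct, and your proof is essentially the same as the paper's: both inclusions are handled via the same comaximality arguments, with the Furstenberg hypothesis used to produce an irreducible divisor of each $x\in R^\#$. The only cosmetic difference is that the paper routes the inclusion $R^\# \subset \bigcup_{p}(R^0\setminus\sigma_p^0)$ through the intermediate containment $\langle p\rangle^0 \subset R^0\setminus\sigma_p^0$ together with the identity $R^\# = \bigcup_{p}\langle p\rangle^0$ (citing \cite[Lemma~3.3]{clark2017euclidean}), whereas you carry out the same computation directly.
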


\begin{proof}
Let $x \in \langle p \rangle^0$ with $p \in \mathcal{I}$. If $x \notin R^0 \setminus \sigma_p^0$, then $R = \langle x \rangle + \langle p \rangle \subset \langle p \rangle \subset R$. This would imply that $p \in R^\times$, which is a contradiction. Therefore, $ \langle p \rangle \setminus \{0\} \subset R^0 \setminus \sigma_p^0$ for all $p \in \mathcal{I}$. On the other hand, it is clear that $\{R^0 \setminus \sigma_p^0: p \in \mathcal{I}\} \subset  R^\#$ since $R^\times \subset \sigma_r^0$ for all $r \in R^0$ (see Theorem \ref{thpropiedades} item (3)), in particular for all $p \in \mathcal{I}$. Now, since $R$ is a Furstenberg domain, we have $ R^\# = \{\langle p \rangle^0: p \in \mathcal{I}\}$ (see \cite[Lemma 3.3]{clark2017euclidean}) and so $R^\# \subset \{R^0 \setminus \sigma_p^0: p \in \mathcal{I}\}$, 
from which the equality follows.
\end{proof}

Now we are ready for the proof of Proposition \ref{pro2}, for which we will work with $M(R)$.

\begin{proof}
    Note that for each $p \in \mathcal{I}$, $R^0 \setminus \sigma_p^0$ is a non-trivial closed set in $M(R)$ and therefore it is nowhere dense in $M(R)$ since $M(R)$ is hyperconnected (see Theorem \ref{thpropiedades} item (9)). Now, if $R$ had a finite number of non-associated irreducible elements, then $R^\#$ would be nowhere dense in $M(R)$, as it would be the finite union of nowhere dense sets in $M(R)$ (see Lemma \ref{lemadenso} and Lemma \ref{lm4}), which is not possible by Lemma \ref{lem2}. Hence, $R$ has an infinite number of non-associated irreducible elements. \end{proof}

Note that by using Theorem \ref{th6} and Lemma \ref{lem2}, the following theorem is obtained.

\begin{theorem}
In general, if $R^\times$ is not open in $M(R)$ and $R$ is a Furstenberg domain, then $R$ has an infinite number of non-associated irreducible elements. Hence, $R$ can not be a Cohen-Kaplansky domain.
\end{theorem}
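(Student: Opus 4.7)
The plan is to mirror the proof of Proposition \ref{pro2} almost verbatim, substituting Theorem \ref{th6} for the (now unavailable) Lemma \ref{lem 1}. The cardinality hypothesis $\# R^\times < \# R$ in Proposition \ref{pro2} is used only to guarantee that $R^\#$ is dense in $M(R)$; but Theorem \ref{th6} already delivers that density from the weaker topological hypothesis that $R^\times$ is not open in $M(R)$. So once density is in hand, the rest of the Baire-style argument is unchanged.

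First I would assume, for contradiction, that the set $\mathcal{I}$ of non-associated irreducible elements of $R$ is finite, say $\mathcal{I} = \{p_1, \dots, p_n\}$. Since $R$ is a Furstenberg domain, Lemma \ref{lemadenso} gives
\begin{equation*}
R^\# = \bigcup_{i=1}^n \bigl(R^0 \setminus \sigma_{p_i}^0\bigr).
\end{equation*}
Each $R^0 \setminus \sigma_{p_i}^0$ is closed in $M(R)$ as the complement of a basic open set, non-empty because it contains $p_i$ itself (by Lemma \ref{lem}, since $p_i \notin R^\times$), and proper because every unit lies in $\sigma_{p_i}^0$ by Theorem \ref{thpropiedades}(3). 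Hyperconnectedness of $M(R)$ (Theorem \ref{thpropiedades}(9)) then forces every non-trivial closed subset to have empty interior, so each $R^0 \setminus \sigma_{p_i}^0$ is nowhere dense. By Lemma \ref{lm4}, the finite union $R^\#$ is therefore nowhere dense in $M(R)$.

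On the other hand, Theorem \ref{th6} applied to the standing hypothesis yields that $R^\#$ is dense in $M(R)$; since $M(R)$ is non-empty (indeed $R$ is not a field), a nowhere dense set cannot be dense, and this is the desired contradiction. Hence $\mathcal{I}$ must be infinite.

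The main obstacle is essentially nil, since every substantive step has already been isolated as a numbered lemma earlier in the section, and the role of Theorem \ref{th6} is precisely to render the cardinality hypothesis optional. The only small verification is that each $R^0 \setminus \sigma_{p_i}^0$ is genuinely a non-trivial closed subset of $M(R)$ (neither empty nor all of $R^0$), which, as noted, follows immediately from Lemma \ref{lem} and Theorem \ref{thpropiedades}(3).
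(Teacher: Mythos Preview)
Your proposal is correct and follows essentially the same route as the paper: the paper explicitly notes that the theorem is obtained by rerunning the proof of Proposition~\ref{pro2} with Theorem~\ref{th6} supplying the density of $R^\#$ in place of Lemma~\ref{lem 1}, and that is precisely what you do. The only cosmetic difference is that you spell out why each $R^0 \setminus \sigma_{p_i}^0$ is non-trivial via Lemma~\ref{lem} and Theorem~\ref{thpropiedades}(3), which the paper leaves implicit.
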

We can not drop the hypothesis of the cardinality of units need to be strickly less than the cardinality of the integral domain. For example, consider the integer localized at $\langle 2 \rangle$, it an atomic domain (hence a Furstenberg domain) and it has only one irreducible element represented by $2$.


\section{On Ring Isomorphisms and Homeomorphisms}

In this section, $S$ will denote an integral domain that is not a field.  Our goal is to motivate the \textit{problem of homeomorphisms} on $\widetilde{M(R)}$, and in particular on $M(R)$. We begin with the following lemma.

\begin{lemma}\label{lemaiso1}
Let $\phi: R \to S$ be a ring isomorphism. Let $\sigma_k \in \mathcal{B}_R$. Then $r \in \sigma_k$ if and only if $\phi(r) \in \sigma_{\phi(k)}$ (here $\sigma_{\phi(k)} \in \mathcal{B}_S$). Consequently, $\phi(\sigma_k) = \sigma_{\phi(k)}$.
\end{lemma}

\begin{proof}
If $r \in \sigma_k$, then $\langle r \rangle + \langle k \rangle = R$. Since $\phi$ is a ring homeomorphism, then
\begin{equation*}
    S = \phi(R) = \phi(\langle r \rangle + \langle k \rangle) = \langle \phi(r) \rangle + \langle \phi(k) \rangle.
\end{equation*}

Thus, $\phi(r) \in \sigma_{\phi(k)}$. Conversely, if $\phi(r) \in \sigma_{\phi(k)}$, then $\langle \phi(r) \rangle + \langle \phi(k) \rangle = S$. Let $\phi^{-1}: S \to R$ be the inverse of $\phi$ (which is also a ring isomorphism). Then
\begin{equation*}
    R = \phi^{-1}(S) = \phi^{-1}(\langle \phi(r) \rangle + \langle \phi(k) \rangle) = \langle \phi^{-1}(\phi(r)) \rangle + \langle \phi^{-1}(\phi(k)) \rangle = \langle r \rangle + \langle k \rangle.
\end{equation*}

Thus, $r \in \sigma_k$.
\end{proof}

\begin{theorem}\label{thisoI}
If $R \cong S$, then $\widetilde{M(R)} \simeq \widetilde{M(S)}$. Similarly, $M(R) \simeq M(S)$.
\end{theorem}

\begin{proof}
Since $R \cong S$, there exists a ring isomorphism $\phi: R \to S$. Let $\sigma_s \in \mathcal{B}_S$. Note that for $s$, there exists $r \in R$ such that $\phi(r) = s$. Then, using Lemma \ref{lemaiso1}, we have
\begin{equation*}
    \phi^{-1}(\sigma_s) = \phi^{-1}(\sigma_{\phi(r)}) = \sigma_{\phi^{-1}(\phi(r))} = \sigma_r.
\end{equation*}

Thus, $\phi$ is continuous, and therefore $\widetilde{M(R)} \simeq \widetilde{M(S)}$.
\end{proof}

\begin{corollary}
     Let $h: R\to S$ be a ring homomorphism and $K:=ker(h)$. Then $\widetilde{M(R/K)}\simeq \widetilde{M(h(R))}$.
\end{corollary}
Theorem \ref{thisoI} essentially states that \textit{isomorphisms induce homeomorphisms}. However, the converse does not generally hold, as illustrated in the following example.

\begin{example}
    Let $R = S = \mathbb{Z}$. Define $h: M(\mathbb{Z}) \to M(\mathbb{Z})$ by
    \begin{equation*}
    h(n) = \left\{ \begin{array}{lcc} 
        4 & \text{if} & n = 2 \\ \\ 
        2 & \text{if} & n = 4 \\ \\ 
        n & \text{otherwise.} 
    \end{array} \right.
    \end{equation*}
    It is easy to see that $h$ is not an isomorphism. However, it is a homeomorphism (see \cite[Example 3.1]{macias2024self}).
\end{example}

Finally, we present the following result.

\begin{theorem}
    Let $h: \widetilde{M(R)} \to \widetilde{M(S)}$ be a homeomorphism. Then $h(R^\times) = S^\times$.
\end{theorem}

\begin{proof}
    Let $u \in R^\times$. We will show that $h(u) \in S^\times$. Assume $h(u) \notin S^\times$. By Proposition \ref{propclausure}, we have
    \begin{equation*}
        \textbf{cl}_{\widetilde{M(S)}}(\{h(u)\}) = \mathbb{j}(\langle h(u) \rangle) \subsetneq S.
    \end{equation*}
    On the other hand, using the fact that $h$ is a homeomorphism and Theorem \ref{thpropiedades}, we get
    \begin{equation*}
        \textbf{cl}_{\widetilde{M(S)}}(\{h(u)\}) = h(\textbf{cl}_{\widetilde{M(R)}}(\{u\})) = h(R) = S.
    \end{equation*}
    Thus, $S \subsetneq S$, which is absurd. Therefore, $h(u) \in S^\times$, and so $h(R^\times) \subset S^\times$.

    Now note that this argument holds for \textit{any homeomorphism}, in particular for $h^{-1}: \widetilde{M(S)} \to \widetilde{M(R)}$. Let $u \in S^\times$. Then $u = h(u^*)$ for some $u^* \in R$. Suppose $u^* \notin R^\times$. Then $h^{-1}(u) = h^{-1}(h(u^*)) = u^* \notin R^\times$, which is a contradiction. Hence, $u \in h(R^\times)$, and thus $S^\times \subset h(R^\times)$.
\end{proof}

Motivated by this last result, it is worth mentioning that the results presented in \cite{macias2024self} can be reproduced (generalized) for infinite principal ideal domains. 

Finally, we post this last question about the converse of \ref{thisoI}.

\begin{problem}
    When does a homeomorphism of $\widetilde{M(S)}$ and $\widetilde{M(R)}$ determine an isomorphism of the rings $S$ and $R$?
\end{problem}

\printbibliography
\end{document}